\newcommand\blfootnote[1]{%
	\begingroup
	\renewcommand\thefootnote{}\footnote{#1}%
	\addtocounter{footnote}{-1}%
	\endgroup
}
\numberwithin{equation}{section}
\numberwithin{equation}{section}
\newtheorem{theorem}{Theorem}[section]
\newtheorem{lemma}[theorem]{Lemma}
\newtheorem{proposition}[theorem]{Proposition}
\newtheorem{definition}[theorem]{Definition}
\newtheorem{remark}[theorem]{Remark}
\newcommand\numberthis{\addtocounter{equation}{1}\tag{\theequation}}
\crefname{enumi}{}{parts}
\crefname{assumption}{Assumption}{Assumptions}
\newcommand{\Vol}{\mathrm{Vol}}
\renewcommand{\P}{\mathbb{P}}
\newcommand{\E}{\mathbb{E}}
\newcommand{\norm}[1]{\left\lVert#1\right\rVert}
\newcommand{\Var}{\mathrm{Var}}
\newcommand{\abs}[1]{\left\lvert#1\right\rvert}
\newcommand{\Supp}{\mathrm{Supp}}
\renewcommand{\L}{\mathcal{L}}
\newcommand{\R}{\mathbb{R}}
\renewcommand{\H}{\mathcal{H}}
\newcommand{\V}{\mathcal{V}}
\newcommand{\vh}{(\Supp\,\varphi)_{h_1}}
\newcommand{\vhh}{(\Supp\,\varphi)_{2h_1}}
\DeclareMathOperator{\divergence}{div}
\newcommand{\M}{M}
\DeclareMathOperator{\Ric}{Ric}
\DeclareMathOperator{\Hess}
{Hess}
\DeclareMathOperator{\Tr}{\mathrm{Tr}}
\DeclareMathOperator{\Ind}{\mathbf{1}}
\newcommand{\I}{\mathcal{I}}
\DeclareMathOperator{\RW}{(RW)}
\DeclareMathOperator{\URW}{(URW)}
\DeclareMathOperator{\SEP}{(SEP)}
\newcommand{\biggg}{\bBigg@\thr@@}
\newcommand{\Biggg}{\bBigg@{3.5}}
\author{Jonathan Junn\'e\footnote{Delft Institute of Applied Mathematics, Faculty of Electrical Engineering, Mathematics and Computer Science, Delft University of Technology, Mekelweg 4, 2628CD Delft, The Netherlands. \href{mailto:j.junne@tudelft.nl}{J.Junne@tudelft.nl} \& \href{mailto:F.H.J.Redig@tudelft.nl}{F.H.J.Redig@tudelft.nl} \&
\href{mailto:R.Versendaal@tudelft.nl}
{R.Versendaal@tudelft.nl}}
\and Frank Redig$^{\ast}$
\and Rik Versendaal$^{\ast}$}
\title{Hydrodynamic Limit of the Symmetric Exclusion Process on Complete Riemannian Manifolds and Principal Bundles}
\begin{document}
\maketitle

\begin{abstract}
We prove that the hydrodynamic limit of the symmetric exclusion process (SEP) is a Fokker-Planck equation in the setting of Poisson random neighborhood graphs approximating a weighted Riemannian manifold with Ricci curvature bounded from below. We also consider the lift of the SEP to a principal bundle, and obtain a Fokker-Planck equation with a weighted horizontal Laplacian as its hydrodynamic limit. 
Both results significantly extend the geometric settings in which one can prove the hydrodynamic limit from duality combined with convergence of the single particle random walk towards a diffusion process.
\end{abstract}
\blfootnote{\emph{Keywords and phrases.} Symmetric exclusion process, Hydrodynamic limit, Stochastic geometry, Riemannian manifold.} 
\blfootnote{\emph{2020 Mathematics Subject Classification.} 58J65, 60D05, 60K37, 82B43}

\section{Introduction}
\label{Sec:Introduction}
Interacting particle systems serve as microscopic models from which macroscopic non-equilibrium behaviour can be derived. An example is the hydrodynamic limit where one derives partial differential equations for  macroscopic quantities such as the particle density by appropriate scaling of space and time in the microscopic particle dynamics,
see e.g. the monographs of Kipnis and Landim \cite{kipnis_scaling_1999}, DeMasi and Presutti \cite{demasi_mathematical_1992}, and Spohn \cite{spohn_large_1991}. A well-known and thoroughly studied example of an interacting system is the symmetric exclusion process (SEP) introduced by Spitzer \cite{spitzer_interaction_1970} (see also Liggett \cite{liggett_interacting_2005}, Chapter 8), where particles perform random walks with the restriction of having at most one particle per site. For this model in the standard setting of the $d$-dimensional lattice with nearest neighbour jumps, the hydrodynamic limit is the heat equation which can be derived after diffusive rescaling of space and time.

At present, hydrodynamic limits have been considered mostly on the lattice $\mathbb{Z}^d$ or on a discrete torus, and the corresponding macroscopic space is then either $\R^d$ or the $d$-dimensional torus. Let us mention a few cases where the hydrodynamic limit of the (SEP) was obtained beyond the Euclidean setting:
Faggionato \cite{Faggionato2022} considered symmteric exclusion processes in random environments, including Delaunay's triangulations starting from a homogeneous Poisson point process for which the limiting heat equation has a homogenized diffusion matrix. Chen and Gonçalvez \cite{1705.10290, chen1} studied hydrodynamic limits for particle systems on graphs approximating a fractal such as the Sierpinski gasket.

In \cite{van_ginkel_hydrodynamic_2020}, Redig and Van Ginkel started investigating the setting where the macroscopic space is a compact Riemannian manifold, with the aim of deriving macroscopic equations from interacting particle systems on discrete graphs approximating the manifold. More precisely, for a general class of neighborhood graphs that approximate a compact Riemannian manifold (based on uniformly chosen points according to the normalized volume measure), in \cite{van_ginkel_hydrodynamic_2020} the corresponding SEP is studied and a proof of the hydrodynamic limit  (yielding the heat equation) is given. 

In this paper, we extend the realm of these results in several ways. First, we drop the assumption of the compactness of the manifold. Second, we consider random neighborhood graphs whose vertices are given by a Poisson point process with Gibbs intensity. Third, whereas \cite{van_ginkel_hydrodynamic_2020} relied on the Wasserstein convergence of empirical distribution of i.i.d. points sampled from the normalised volume measure, here we rely on a simpler argument based on Bernstein's inequality to prove convergence of the rescaled random walks on the random neighbourhood graphs obtained from the Poisson point processes towards a diffusion process. Fourth, we lift the (SEP) to principal bundles, thus obtaining a simple sub-Riemannian setting where convergence to the heat equation corresponding to the horizontal Laplacian is obtained. As a result, starting from the corresponding (SEP), we obtain several new variants of the heat equation, including the heat equation with weighted Laplacian 
$e^U\divergence(e^{-U} \nabla)$, where $U$ is the potential of the Gibbs measure, replacing the Laplace-Beltrami operator. In the principal bundle setting, we obtain a horizontal heat equation with weighted horizontal Laplacian which can be considered as a first result in a sub-Riemannian setting. These results extend significantly the hydrodynamic equations that one can obtain from the (SEP), both in extending the geometric setting, but even in the Euclidean setting, where the heat equation with a weighted Laplacian is new.

The (SEP) is special because it manifests a microscopic closure of the expected density field; i.e., one does not need replacement lemmas to derive a macrosopic equation. This property, which can be viewed as a manifestation of self-duality, is crucial in the derivation of the (expected) macroscopic equation, which reduces to the proof
of the scaling limit of a single random walk (RW) to a diffusion process; the Kolmogorov forward equation of this diffusion process is then the hydrodynamic limit. Therefore, an important aspect is to consider suitable symmetric transition rates for (RW) on neighbourhood graphs in order to approximate a diffusion process with (horizontal) weighted Laplacian as generator. These transition rates are inspired from Hein, Audibert and von Luxburg
\cite{belkin_discrete_2008} and Gao \cite{gao_diffusion_2021}; in the context of horizontal diffusions, this requires a double scaling in the base space and the fibers. To prove weak convergence in path space of the trajectory of the empirical density, one needs to additionally control the quadratic variation of the corresponding Dynkin martingale. 

The rest of the paper is organized as follows: In \cref{sec:Main result} we introduce the (SEP) in our geometric setting and state the main results.
In \cref{sec:Existence} we establish existence of the (SEP) in our setting. For the case of non-compact manifolds, the approximating neighborhood graph is infinite, and the classical conditions of existence of SEP (from \cite{liggett_interacting_2005}) are not satisfied. However, via the graphical construction the proof of existence can be reduced to the proof of existence (i.e., non-explosion) of the random walk almost surely in the realization of the Poisson point process.  

In \cref{Sec:Abstract hydrodynamic limit via duality} we prove an abstract hydrodynamic limit for the (SEP) on complete manifolds which encompasses our setting. In \cref{Sec:Convergence of discrete random walks} we prove the almost sure convergence of a single (RW) on an increasing sequence of random neighbourhood graphs drawn from a (PPP) with Gibbs intensity. In \cref{bundlesect} we extend the setting to principal bundles by a horizontal lift procedure and obtain the corresponding hydrodynamic limit.

\section{Main result}\label{sec:Main result}
We consider a geodesically complete connected $m$-dimensional Riemannian manifold $M$ equipped with a Gibbs reference measure 
\begin{equation}\label{refmeas}
d\mu  \coloneqq e^{-U}d\Vol_M.
\end{equation} 
Here $\Vol_M$ denotes the Riemannian volume measure. We make the following assumptions on the manifold and the Gibbs measure:
\begin{itemize}
    \item[(A1)]\label{M} The manifold has a Ricci curvature lower bound; $\Ric_M \ge (m-1)\kappa$ for some $\kappa \in \R$.
    \item [(A2)]\label{U} The smooth potential $U \in C^\infty(M)$ of the Gibbs measure is such that $e^{-U}$ is bounded and the weighted manifold $(M, d\mu)$ is \emph{stochastically complete}; that is, for all $x \in M$,
    \begin{equation*}
        \int_M p_t(x, y) \: d\mu(y) = 1,
    \end{equation*}
    where $p_t$ denotes the heat kernel of the operator $\Delta - \nabla U \cdot \nabla$.
\end{itemize}
In the next definition we introduce a sequence of random neighborhood graphs $(G_N)_{N \ge 1}$ which approximate the manifold with corresponding measure $d\mu$. 
We denote by $d_M$ the Riemannian distance on $M$.
\begin{definition}[Random neighborhood graph (RNG)]\label{def:Random neighbourhood graph}
    Let $\Lambda^N$ be a \emph{Poisson point process ($\mathrm{PPP}$)} of intensity $Nd\mu$. 
    The \emph{vertices} $V_N$ consist of the points $X^i$ of $\Lambda^N$, and an \emph{undirected edge} is present between two vertices whenever they are close; i.e., $d_M(X^i, X^j) \le h_N$ for some \emph{bandwidth parameter} $h_N$. 
    The sequence of \emph{random neighborhood graphs (RNG)} is given by $(G_N \coloneqq (V_N, E_N))_{N \ge 1}$, where $E_N$ denotes the set of undirected edges.
\end{definition}
The symmetric exclusion process (SEP) we consider evolves on those random neighborhood graphs. 
Each site can accommodate at most one particle; this is the exclusion rule. 
Particles hop with symmetric edge weights over edges; jumps leading to more than one particle per site are suppressed.
\begin{definition}[Symmetric exclusion process (SEP)]\label{def:Symmetric exclusion process (SEP)}
    Let $G = (V, E)$ be a graph with symmetric weights $W$ on the edges. We denote a \emph{configuration of particles} by $\eta \in \Omega_V = \{0,1\}^V$ and the exchange of occupancies between vertices $x$ and $y$ by
    \begin{equation*}
        \eta^{x,y}(z) \coloneqq     \begin{cases}
            \eta(z) &\text{if }z \ne x, y, \\
            \eta(y) &\text{if }z = x, \\
            \eta(x) &\text{if }z = y.
        \end{cases}
    \end{equation*}
    The \emph{symmetric exclusion process (SEP)} is the continuous-time Markov jump process on the \emph{configuration space} $\Omega_V$ whose generator evaluated on functions depending only on a finite number of occupancies, namely, \emph{local functions} $f : \Omega_V \to \R$, is given by
    \begin{equation*}
        L_G^{\SEP}f(\eta) \coloneqq \sum_{(x, y) \in E} W(x, y) \left(f(\eta^{x,y}) - f(\eta)\right).
    \end{equation*}
\end{definition}
In our setting the
choice of symmetric weights take into account the geometry of the manifold $M$, and the choice of the potential $U$. More precisely, 
let $k : \R^+ \to \R^+$ be a bounded measurable function with compact support $[0, 1]$; we refer to $k$ as a \emph{kernel}.
We consider the following \emph{symmetric edge weights},
\begin{equation}\label{eq:symmetric weights for thm}
    W_N(X^i, X^j) \coloneqq W_N^{ij} \coloneqq \frac{h_N^{-m}k\left(\frac{d_M(X^i,X^j)}{h_N}\right)}{\sqrt{e^{-U(X^i)}e^{-U(X^j)}}}.
\end{equation}
\begin{remark}
The choice of the edge weights is motivated by the fact that the corresponding diffusively rescaled random walk with jumps over the edge $(xy)$ with rate 
$h_N^{-2} W_N(x,y)$ converges to a diffusion process with generator
$\Delta - \nabla U \cdot \nabla$. Because the edge weights are symmetric, by self-duality of the (SEP), the hydrodynamic limit of the associated (SEP) reduces to the scaling limit of a single random walk (see \cref{Sec:Abstract hydrodynamic limit via duality} for details).
\end{remark}
In order to formulate the result of the hydrodynamic limit, we introduce the empirical measure associated to a configuration of (SEP). 
More precisely, for a configuration $\eta^N$ of the (SEP) on the weighted graphs $G^N$ we define the macroscopic quantity
\begin{equation}\label{empmeas}
\widehat{\pi}^N=\frac{1}{N} \sum_{X^i \in V_N} \eta^N(X^i) \delta_{X^i}.
\end{equation}
The hydrodynamic limit can then be described in words as follows: if the empirical measure at time $0$ converges in an appropriate sense to $\rho_0 d\mu$, then after diffusive rescaling of time by $h_N^{-2}$, at time $t$, the
empirical measure converges to $\rho_t d\mu$ where $\rho_t$ satisfies \eqref{eq:heat drift} below.
We give below the precise formulation of this result on the hydrodynamic limit.

\begin{theorem}[Quenched hydrodynamic limit of the SEP]\label{thm:Hydrodynamic limit}
    Let $(G_N=(V_N,E_N))_{N \ge 1}$ be a sequence of $\mathrm{RNG}$ on $M$ in the sense of \cref{def:Random neighbourhood graph} with symmetric weights $(W_N)_{N\ge1}$ given by  \eqref{eq:symmetric weights for thm}. Assume that $(M, d\mu)$ satisfies $\mathrm{(A1)}$ and $\mathrm{(A2)}$. Let $\eta_0^N$ be the initial configuration of the $\mathrm{SEP}$ on $G_N$ with associated initial empirical measure $\widehat{\pi_0}^N$ that converges vaguely to $\rho_0 \,d\mu$ in probability; i.e.,
    \begin{equation*}
        \forall \varphi \in C_0(M), \quad \left\langle \varphi, d\widehat{\pi_0}^N \right\rangle \overset{\mathrm{pr.}}{\to} \left\langle \varphi, \rho_0 \: d\mu\right\rangle \quad \text{as }N\to +\infty, \quad \widehat{\pi_0}^N \coloneqq \frac{1}{N} \sum_{X^i \in V_N} \eta_0^N(X^i) \delta_{X^i}.
    \end{equation*}
    Suppose that the bandwidth parameters $(h_N)_{N\ge1}$ are chosen in such a way that both $h_N \to 0$ and $Nh_N^{m+2}/\log N \to +\infty$ as $N \to +\infty$.  Then for almost all realizations of the sequence of graphs, the trajectory of the empirical measure $t\mapsto \widehat{\pi_t}^N$ associated to the configuration $\eta_t^N$ that evolves according to the diffusively rescaled $\mathrm{SEP}$ with generator $h_N^{-2} L_{G_N}^{\SEP}$ converges in the Skorokhod topology to a deterministic trajectory $t\mapsto \rho_t\, d\mu$ whose density $\rho_t$ is the weak solution of
    \begin{equation}\label{eq:heat drift}
        \partial_t \rho_t = C_k\left(\Delta - \nabla U \cdot \nabla\right)\rho_t,
    \end{equation}
    with initial condition $\rho_0$ and for some constant $C_k$ that depends only on moments of the kernel $k$ in \eqref{eq:symmetric weights for thm}, provided there exists a unique weak solution.
\end{theorem}
\cref{thm:Hydrodynamic limit} is a direct consequence of an abstract hydrodynamic limit (\cref{thm:Abstract hydrodynamic limit}) and an almost sure pointwise consistency of a single random walk on the RNG (\cref{thm:almost sure convergence empirical URW to Delta 2(1-alpha) general}) that we establish in \cref{Sec:Abstract hydrodynamic limit via duality} and \cref{Sec:Convergence of discrete random walks} respectively. The existence of the SEP is the content of \cref{sec:Existence}.
\begin{remark}
    \hfill
    \begin{enumerate}
        \item In \cref{thm:Hydrodynamic limit}, we prepare the initial configuration to approximate $\rho_0 d\mu$ and consider the evolution of the trajectory $t \mapsto \rho_t\, d\mu$ for a density $\rho_t$ with respect to the reference Gibbs measure $d\mu$. Since the limiting operator $\Delta - \nabla U \cdot \nabla = e^U\nabla \cdot (e^{-U} \nabla)$ of a single random walk is reversible in $L^2(M, d\mu)$, the hydrodynamic limit \eqref{eq:heat drift} is indeed a Forward Kolmogorov equation. Regarding the existence of a unique weak solution, it is sufficient to require that $\Hess U \ge \beta$ for some $\beta \in \R$ so the weighted manifold $(M, d\mu)$ is stochastically complete (see e.g. Grigor'yan \cite{jorgenson_heat_2006}).
        \item If instead, one prepares the initial configuration such that it approximates $\rho_0 d\Vol_M$, then it is natural to look for the evolution of the density $\rho_t d\Vol_M$; that is, with respect to the volume measure. Because
        the $L^2(M, d\Vol_M)$-adjoint of $\Delta - \nabla U \cdot \nabla$ is $\Delta + \divergence(\cdot \nabla U)$,
        in that case, the hydrodynamic limit is given by the Fokker--Planck equation
        \begin{equation}\label{Fokker}
        \partial_t\rho_t = C_k\nabla \cdot \left(\nabla \rho_t + \rho_t\nabla U\right)
        \end{equation}
        
        \item If instead of \eqref{eq:symmetric weights for thm} we consider the modified symmetric weights $W_N$ as in \eqref{eq:kernel symmetric weight} below, then we obtain a variant of our main result where the hydrodynamic limit is given by
    \begin{equation*}
        \partial_t\rho_t = C_{k,\alpha}e^{(2\alpha - 1)U}\left(\Delta - \alpha\nabla U \cdot \nabla\right)\rho_t.
    \end{equation*}
    \end{enumerate}
\end{remark}

\section{Existence of the SEP}\label{sec:Existence}
In a non-compact setting, the existence of the SEP is not trivial, because the weighted graphs $G_N$ with weights $W_N$
given in \eqref{eq:symmetric weights for thm} are infinite graphs as soon as the measure $d\mu$ in \eqref{refmeas} is an infinite measure (i.e., when $\mu(M)=\infty$). Even if  the sum of jump rates is locally finite; that is, $\sum_{y \sim x} W(x,y) < +\infty$ almost surely for all $x \in \Lambda^N$, with our choice of weights we have
\begin{equation}\label{eq:unif}
    \sup_{x \in \Lambda^N} \sum_{y \sim x} W(x,y) = +\infty \quad \text{a.s.}
\end{equation}
whenever $\mu(M)=\infty$.
Hence, the existence criterion \cite[Proposition 6.1]{liggett_interacting_2005} for the process is not satisfied.

Instead, via the graphical construction of (SEP), it is enough to prove that a single random on the graph has infinite existence time for every starting point. To this end, we rely on a general criterion of graph stochastic completeness.
\begin{theorem}[Folz \cite{folz_volume_2013}]\label{thm:Folz}
    Let $(G, W)$ be a countably finite, locally finite connected weighted graph and $d_G$ a graph metric such there exist constants $C, C' > 0$ satisfying
    \begin{equation}\label{eq:criterion C C prime}
        d_G(x, y) \le C \text{ if } x \sim y, \quad \sum_{y \sim x} W(x,y)\, d_G(x,y) ^2 \le C' \text{ for all } x \in G.
    \end{equation}
    Then $(G, W)$ is stochastically complete if 
    \begin{equation}\label{eq:Osgood}
        \int_{r_o}^{\infty} \frac{r}{\log \# B_{G}(o, r)}\, dr = +\infty
    \end{equation}
    for some $o \in G$ and $r_o > 0$, where $B_{G}(o, r)$ denotes the graph ball of radius $r$ centered at $o$.
\end{theorem}
The Ricci curvature lower bound $\Ric_M \ge (m-1)\kappa$ we assume allows us to compare the volume of large balls with the ones of the model spaces.
Ultimately, thanks to this comparison, we will be able to upper bound the denominator of \eqref{eq:Osgood} in a way that the integral diverges, and thus prove the existence of the process.
For the comparison of  the volume of large balls with the ones of the model spaces, we need the Bishop--Gromov inequality stated below.
\begin{theorem}[{Bishop--Gromov inequality \cite[Theorem 11.19]{lee_introduction_2018}}]\label{thm:Bishop-Gromov inequality}
    Assume $\Ric_M \ge (m-1)\kappa$ for some $\kappa \in \R$.
    Then, for every $x \in M$,
    \begin{equation}\label{eq:Bishop Gromov ineq}
        \Vol_M(B_M(x,r)) \le {\Vol_\kappa(r)},
    \end{equation}
    where $\Vol_\kappa(r)$ denotes the volume of a ball of radius $r$ in the simply connected space of constant sectional curvature $\kappa$.
\end{theorem}
\begin{remark}\label{rmk:growth}
    Under the Ricci lower bound $\Ric_M \ge (m-1)\kappa$, if $\kappa > 0$, then the manifold is compact by Myer's theorem \cite[Theorem 12.24]{lee_introduction_2018}.
    If $\kappa = 0$, then its volume growth is at most like $r^m$. 
    Finally, if $\kappa < 0$, then its volume growth is at most like $e^{(m-1)\sqrt{-\kappa}r}$.
\end{remark}
We also need a basic large deviation estimate for the Poisson distribution stated below.
\begin{proposition}[{Bennett's inequality \cite[Theorem 2.9]{boucheron_concentration_2013}}]\label{thm:Bennett's inequality}
    One has
    \begin{equation}\label{eq:Bennett upper}
        \P\left[\mathrm{Poisson}(\lambda) \ge \lambda\left(1 + s)\right)\right] \le \exp\left(-\lambda\Phi(s)\right)
    \end{equation}
    for $s \ge 0$ and 
    \begin{equation*}
        \P\left[\mathrm{Poisson}(\lambda) \le \lambda\left(1 + s)\right)\right] \le \exp\left(-\lambda\Phi(s)\right)
    \end{equation*}
    for $-1 \le s \le 0$, where $\Phi(s) \coloneqq (1 + s)\log(1 + s) - s$.
\end{proposition}
\begin{theorem}[Quenched existence of the SEP] \label{thm:existence}
    Let $(G_N=(V_N,E_N))_{N \ge 1}$ be a sequence of $\mathrm{RNG}$ on $M$ in the sense of \cref{def:Random neighbourhood graph} with symmetric weights $(W_N)_{N\ge1}$ given by  \eqref{eq:symmetric weights for thm}. Assume that $\Ric_M \ge (m-1)\kappa$ for some $\kappa \in \R$ and that $e^{-U}$ is bounded. Then for almost every realization of the sequence of graphs, the $\mathrm{SEP}$ exists for all time.
\end{theorem}
\begin{proof}
\textbf{Step 1 (Choice of an adapted graph metric)}.
The existence is trivial if $M$ is compact. 
Thus we assume $\Ric_M \ge (m-1)\kappa$ for some $\kappa \le 0$ in the non-compact case; recall that $\kappa > 0$ implies that $M$ is compact.
Set the length of edges as
\begin{equation*}
    l_{G_N}(x, y) \coloneqq \mathrm{min}\left\{1, \frac{1}{\sqrt{\lambda_N(x) \vee \lambda_N(y)}}\right\}, \quad \lambda_N(x) \coloneqq \sum_{y \sim x} W_N(x, y),
\end{equation*}
and let $d_{G_N}$ be the associated shortest-path metric; that is,
\begin{equation*}
    d_{G_N}(x, y) \coloneqq \inf\left\{\sum_{i=1}^\ell l_{G_N}(X^i, X^{i+1}) \: : \: X_1 = x, X_{\ell+1} = y \text{ and } X_i \sim X_{i+1} \right\}.
\end{equation*}
Then we can pick $C = C' = 1$ in \eqref{eq:criterion C C prime} for any connected component of the graph since $d_{G_N}(x, y) \le 1$ for $x \sim y$ and
\begin{equation*}
    \sum_{y \sim x} W_N(x,y)\, d_{G_N}(x,y)^2 \le \sum_{y \sim x} W_N(x,y) \,\frac{1}{\lambda_N(x)} = 1.
\end{equation*}
\noindent \textbf{Step 2 (Comparison of the graph and Riemannian balls)}.
We claim that for all $x \in \Lambda^N$,
\begin{equation}\label{eq:claim}
    B_{{G_N}}(x, r) \subseteq \Lambda_N \cap B_M(x, C_1 r^2) \quad \text{for all $r$ large enough a.s.}
\end{equation}
with some constant $C_1(N,m,\kappa,\norm{k}_\infty, \norm{e^{-U}}_\infty)$. Let $y \in B_{G_N}(x, r)$. Then there exists a path $\gamma$ from $x$ to $y$ of the form
\begin{equation*}
    x \eqqcolon X^1 \sim \cdots \sim X^{\ell+1} \coloneqq y
\end{equation*}
for some $\ell > 0$ such that
\begin{equation}\label{eq:path bound}
    \sum_{i=1}^\ell l_N(X^i, X^{i+1}) \le r.
\end{equation}
Moreover, $X^i \in \overline{B_M}(x, R_\gamma)$ for all $1 \le i \le \ell+1$, where $R_\gamma \coloneqq \max \{d_M(x, X^i)\}_{i=1}^{\ell+1}$; the closure is with respect to the Riemannian distance. In particular,
\begin{equation}\label{eq:first inclusion}
    y \in \Lambda^N \cap \overline{B_M}(x, R_\gamma)
\end{equation}
and
\begin{equation}\label{eq:bound lambda lambda prime}
    \lambda_N(y) \le \lambda_N'(R_\gamma), \quad \lambda'_N(R) \coloneqq \sup_{z \in \Lambda^N \cap \overline{B_M}(x, R)} \lambda_N(z).
\end{equation}
Hence, combining \eqref{eq:path bound} and \eqref{eq:bound lambda lambda prime},
\begin{equation*}
    \frac{\ell}{\sqrt{\lambda'_N(R_\gamma + \varepsilon)}} \le \sum_{i=1}^\ell l_N(X^i, X^{i+1}) < r.
\end{equation*}
But since $R_\gamma \le \ell h_N$ by construction, we have
\begin{equation}\label{eq:ineq R gamma}
    R_\gamma < h_N r\sqrt{\lambda'_N(R_\gamma)}.
\end{equation}
Set
\begin{equation}\label{eq:R r epsilon}
    R(r) \coloneqq \sup \left\{r' > 0 \: : \: r' \le h_N r \sqrt{\lambda'_N(r')} \right\}.
\end{equation}
Then \eqref{eq:ineq R gamma} implies $R_\gamma \le R(r)$ so that
\begin{equation}\label{eq:second inclusion}
    \Lambda^N \cap \overline{B_M}(x, R_\gamma) \subseteq \Lambda^N \cap \overline{B_M}(x, R(r)).
\end{equation}
Combining the two inclusions \eqref{eq:first inclusion} and \eqref{eq:second inclusion}, it holds
\begin{equation*}
    B_{G_N}(x, r) \subseteq \Lambda^N \cap \overline{B_M}(x, R(r)).
\end{equation*}
We still have to obtain the almost sure upper bound $R(r) \lesssim r^2$ to prove the claim \eqref{eq:claim}. To this end, we estimate the probability of occurrence of large values of $\lambda_N'$ which then yields the desired upper bound.

Define the number of bad samples (with too much neighbors) as
\begin{equation*}
    N_{\mathrm{bad}}(R; K) \coloneqq \#\left\{z \in \Lambda^N \cap \overline{B_M}(x, R) \: : \: \lambda_N(z) \ge \norm{W_N}_\infty(1 + K) \right\},
\end{equation*}
for which
\begin{equation}\label{eq:simple bound in terms of bad sets}
    \P^x \left[\lambda'_N(R) \ge \norm{W_N}_\infty(1 + K)\right] \le \P^x\left[N_{\mathrm{bad}}(R; K) \ge 1\right],
\end{equation}
where $\P^x$ denotes the conditional probability on the event $x \in \Lambda^N$.
By conditional Markov's inequality,
\begin{align*}
    \P^x\left[N_{\mathrm{bad}}(R; K) \ge 1\right] &\le \E^x\left[N_{\mathrm{bad}}(R; K)\right] \\
    &= \int_{\overline{B_M}(x, R)} \, \P^z\left[\lambda_N(z) \ge \norm{W_N}_\infty(1 + K)\right] Ne^{-U(z)} d\Vol_M(z) \\
    &\le N\norm{e^{-U}}_\infty \left(\sup_{z \in \overline{B_M}(x, R)} \P^z\left[\lambda_N(z) \ge \norm{W_N}_\infty(1 + K)\right]\right) \times \Vol_\kappa(R), \numberthis\label{eq:bound bad in terms of lambda}
\end{align*}
where in the last line we used the Bishop--Gromov inequality \eqref{eq:Bishop Gromov ineq}.
Note that for $z \in \Lambda^N$,
\begin{equation*}
    \lambda_N(z) = \sum_{z' \sim z} W_N(z, z') \le \norm{W_N}_\infty \#\left(\Lambda^N \cap \overline{B_M}(z, h_N)\right).
\end{equation*}
Thus, for every $K$ large enough, combining the Bishop--Gromov inequality, which yields $\mu[\overline{B_M}(z, h_N)] \le \norm{e^{-U}}_\infty \Vol_\kappa(h_N)$, with Bennett's inequality \eqref{eq:Bennett upper},
\begin{align*}
    \P^z\left[\lambda_N(z) \ge \norm{W_N}_\infty(1 + K)\right] &\le \P^z\left[\#\left(\Lambda^N \cap \overline{B_M}(z, h_N)\right) \ge (1 + K) \right] \\
    &= \P\left[\mathrm{Poisson}\left(N\mu\left[\overline{B_M}(z, h_N)\right]\right) \ge K \right] \\
    &\le e^{-C_2K\log K} \numberthis\label{eq:Poisson tail}
\end{align*}
for some constant $C_2 > 0$ depending only on $N, m, \kappa, \norm{k}_\infty$ and $\norm{e^{-U}}_\infty$. Inserting \eqref{eq:Poisson tail} into \eqref{eq:bound bad in terms of lambda} and recalling \eqref{eq:simple bound in terms of bad sets} gives
\begin{equation}\label{eq:summable}
    \P^x \left[\lambda'_N(R) \ge \norm{W_N}_\infty(1 + K)\right] \le C_3e^{-C_2K\log K} \max\left\{R^m,\, e^{(m-1)\sqrt{-\kappa} R}\right\}
\end{equation}
for $R$ large enough and some constant $C_3(N, m, \kappa, \norm{k}_\infty, \norm{e^{-U}}_\infty)$; we used the volume growth of $\Vol_\kappa(R)$. Now, setting $\lceil R \rceil \coloneqq K$ makes \eqref{eq:summable} summable in $K$;
\begin{equation*}
    \sum_{K = 1}^\infty \P^x \left[\lambda'_N(K) \ge \norm{W_N}_\infty(1 + K)\right] < +\infty.
\end{equation*}
Hence, the Borel--Cantelli lemma implies that
\begin{equation}\label{eq:bound lambda prime Borel Cantelli}
    \lambda'_N(K) \le \norm{W_N}_\infty(1 + K) \quad \text{for all $K$ large enough a.s.}
\end{equation}
Recall definition \eqref{eq:R r epsilon},
\begin{equation*}
    R(r) \le h_N r \sqrt{\lambda'_N(R(r))}.
\end{equation*}
Using the estimate \eqref{eq:bound lambda prime Borel Cantelli} on $\lambda'_N$, we obtain that almost surely $R(r) < C_1 r^2$ for $r$ large enough, proving the claim \eqref{eq:claim}.
\newline

\noindent \textbf{Step 3 (Verification of the stochastic completeness criterion)}. 
Since we have the inclusion
\begin{equation*}
    B_{{G_N}}(x, r) \subseteq \Lambda_N \cap B_M(x, C_1 r^2) \quad \text{for all large $r$ enough a.s.},
\end{equation*}
then for some large $r_x > 0$, almost surely
\begin{equation*}
    \int_{r_x}^{\infty} \frac{r}{\log \# B_{G_N}(x, r)}\, dr \ge \int_{r_x}^{\infty} \frac{r}{\log \# \left(\Lambda^N \cap B_{M}(x, C_1r^2)\right)}\, dr.
\end{equation*}
By the Bishop--Gromov inequality \eqref{eq:Bishop Gromov ineq},
\begin{align*}
    \P^x\Big[\# &\left(\Lambda^N \cap B_{M}(x, C_1r^2)\right) \ge 1 + \left(1 + \delta\right) N \norm{e^{-U}}_\infty\Vol_\kappa(C_1 r^2) \Big] \\
    &\le \P\Big[\mathrm{Poisson}\left(N \norm{e^{-U}}_\infty \Vol_M\left[B_{M}(x, C_1r^2)\right]\right) \ge \left(1 + \delta\right) N \norm{e^{-U}}_\infty\Vol_\kappa(C_1 r^2)\Big] \\
    &\le \P\Big[\mathrm{Poisson}\left(N \norm{e^{-U}}_\infty \Vol_\kappa(C_1 r^2)\right) \ge \left(1 + \delta\right) N \norm{e^{-U}}_\infty\Vol_\kappa(C_1 r^2)\Big],
\end{align*}
and via Bennett's inequality \eqref{eq:Bennett upper}, for every $\delta > 0$, the last term can be estimated as
\begin{equation}\label{eq:Bennett summable}
    \P\Big[\mathrm{Poisson}\left(N \norm{e^{-U}}_\infty \Vol_\kappa(C_1 r^2)\right) \ge \left(1 + \delta\right) \Vol_\kappa(C_1 r^2)\Big] \le \exp\left(- N \norm{e^{-U}}_\infty\Vol_\kappa(C_1 r^2) \Phi(\delta)\right).
\end{equation}
Since $\Vol_\kappa(C_1 r^2) \gtrsim r^{2m}$ for $r > 1$, the right-hand side of \eqref{eq:Bennett summable} is summable in $\lceil r \rceil$ (recall that $N$ is fixed), and the Borel--Cantelli lemma yields
\begin{equation*}
    \# \left(\Lambda^N \cap B_{M}(x, C_1r^2)\right) \le N \norm{e^{-U}}_\infty \Vol_\kappa(C_1 r^2) \le C_4 e^{C_5 r^2} \quad \text{for all $r$ large enough a.s.}
\end{equation*}
and constants $C_4, C_5 > 0$ depending only on $N,m,\kappa,\norm{k}_\infty$, and $\norm{e^{-U}}_\infty$ letting $\delta \downarrow 0$.
Finally, we obtain almost surely
\begin{equation*}
    \int_{r_x}^{\infty} \frac{r}{\log \# B_{G_N}(x, r)}\, dr \ge \int_{r_x}^{\infty} \frac{r}{\log \# \left(\Lambda^N \cap B_{M}(x, C_1r^2)\right)}\, dr \gtrsim \int_{r_x}^{\infty} \frac{1}{r}\, dr = +\infty.
\end{equation*}
The stochastic completeness of the weighted random walk in the connected component of $x \in \Lambda^N$ follows by \cref{thm:Folz}, which implies the existence of the SEP in that connected component by the graphical construction.
Since the above argument works for all $x \in \Lambda^N$, we obtain the global existence of the SEP which concludes the proof.
\end{proof}
\begin{remark}
    A commonly used road to show the existence of the (SEP) is by a percolation argument, as in 
    \cite{harris}, which boils down to show the finiteness of ``active clusters'' of a site $x$, uniformly in $x$. In our setting this boils down to adapt arguments from \cite{meester1996continuum} for the Poisson--Boolean model. This methodology leads, however, to additional requirements (upper bounds) on the (sectional) curvature. Therefore, we have chosen the route of showing stochastic completeness instead.
\end{remark}
\section{Abstract hydrodynamic limit via duality}\label{Sec:Abstract hydrodynamic limit via duality}
The SEP satisfies a duality property with a single random walk due to the symmetry of the jump rates.
In words, this property means that 
the SEP generator acting on the $\eta$ variable of the function $D(x,\eta)=\eta(x)$ has the same effect as the single-random walk generator acting on the
$x$ variable of that same function (which is therefore named single particle duality function).
\begin{definition}[Random walk (RW)]\label{def:Random walk}
    Given a graph $G = (V,E)$ with weights $W$, the \emph{random walk (RW)} is the continuous-time Markov jump processes on $V$ whose generator evaluated on functions 
    $\varphi : V \to \R$ is given by
    \begin{equation}\label{eq:RW generator}
        L_{G}^{\RW} \varphi(x) \coloneqq \sum_{y \sim x} W(x,y)\left(\varphi(y) - \varphi(x)\right).
    \end{equation}
\end{definition}
The duality between the RW and the SEP described above manifests itself as follows: for pairings between the empirical measure of a configuration $\eta$ and a local function $\varphi$ on $V$, it holds
\begin{equation*}
    L_G^{\SEP}\left\langle \varphi, \sum_{x \in V}\eta(x)\delta_x \right\rangle = \left\langle L_G^{\RW} \varphi, \sum_{x \in V}\eta(x)\delta_x\right\rangle.
\end{equation*}
Before stating and proving our abstract hydrodynamic limit for the SEP via duality, we introduce three assumptions on the sequence of graphs $(G_N = (V_N, E_N))_{N \ge 1}$ with symmetric weights $(W_N)_{N \ge 1}$ and bandwidth parameters $(h_N)_{N \ge 1} \downarrow 0$:
\begin{itemize}
    \item[(i)] (Initial data is well-prepared). Assume that the empirical measure $\widehat{\pi_0}^N \coloneqq \tfrac{1}{N}\sum_{x \in V_N} \eta^N_0(x) \delta_x$ associated to the initial configuration $\eta^N_0$ of the $\mathrm{SEP}$ on $G_N$ converges vaguely to $\rho_0 \,d\Vol_M$ in probability; that is,
    \begin{equation}\label{eq:initial well prepared}\tag{\MakeUppercase{\romannumeral 1}}
        \forall \varphi \in C_0(M), \quad \left\langle \varphi, d\widehat{\pi_0}^N \right\rangle \overset{\mathrm{pr.}}{\to} \left\langle \varphi, \rho_0 \: d\Vol_M\right\rangle \quad \text{as }N\to +\infty;
    \end{equation}
    \item[(ii)] (Upper local density). For every compact set $S\Subset M$ there exist constants $r_S, C_S > 0$ such that, for all
    $N$ large enough,
    \begin{equation}\label{eq:local assumption}\tag{\MakeUppercase{\romannumeral 2}}
        \sup_{x\in S}\ \sup_{h_N\le r\le r_S}\ 
    \frac{\#(V_N\cap B_M(x,r))}{N\,\Vol_M(B_M(x,r))}\ \le\ C_S;
    \end{equation}
    \item[(iii)] (Convergence of the generator). Let $\L$ be the generator of a diffusion process on $M$ whose core is $C_c^\infty(M)$. 
    Assume that the diffusively rescaled generator $h_N^{-2} L_{G_N}^{\RW}$ converges towards the generator $\L$ in the following sense
    \begin{equation}\label{eq:assumption convergence laplacian}\tag{\MakeUppercase{\romannumeral 3}}
        \forall \varphi \in C^\infty_c(M), \quad \lim_{N \to \infty} \frac{1}{N}\sum_{x\in V_N} \abs{h_N^{-2} L_{G_N}^{\RW} \varphi(x) - \L \varphi(x)} = 0.
    \end{equation}
\end{itemize}
\begin{theorem}(Abstract hydrodynamic limit)\label{thm:Abstract hydrodynamic limit}
    Let $(G_N = (V_N, E_N))_{N \ge 1}$ be a sequence of graphs on $M$ with symmetric weights $(W_N)_{N\ge1}$ and bandwidth parameters $(h_N)_{N \ge 1}$.
    If the assumptions \eqref{eq:initial well prepared}, \eqref{eq:local assumption}, and \eqref{eq:assumption convergence laplacian} are satisfied, then the trajectory of the empirical measure $t \mapsto \widehat{\pi_t}^N$ associated to a configuration $\eta_t^N$ that evolves according to the diffusively rescaled $\mathrm{SEP}$ with generator $h_N^{-2} L_{G_N}^{\SEP}$ converges in the Skohorod topology to a distribution $t\mapsto \rho_t\, d\Vol_M$ whose density solves weakly the forward Kolmogorov equation 
    \begin{equation}\label{eq:forward Kolmogorov}
        \partial_t \rho_t = \L^* \rho_t
    \end{equation}
    with initial condition $\rho_0$ of assumption \eqref{eq:initial well prepared}, provided there exists a unique weak solution; the adjoint is understood in $L^2(M, d\Vol_M)$.
\end{theorem}
\begin{proof}
    We outline the argument for complete manifolds modified from the torus case found in the monograph of Kipnis and Landim \cite[Chapter 4]{kipnis_scaling_1999}.

    \textbf{Step 1 (The Dynkin martingale argument)}.
    We pair any test function $\varphi \in C_c^\infty(M)$ with the empirical measure $\widehat{\pi_t}^N$ associated to the configuration $\eta_t$ (with $0 \le t \le T$ for some time $T > 0$), and we consider its associated Dynkin martingale;
    \begin{equation*}
        M_t^N\left[\varphi\right] = \left\langle \varphi, \widehat{\pi_t}^N \right\rangle - \left\langle \varphi, \widehat{\pi_0}^N \right\rangle - \int_0^t h_N^{-2} L_{G_N}^{\SEP} \left\langle \varphi, \widehat{\pi_s}^N \right\rangle \: ds.
    \end{equation*}
    By duality between the SEP and the RW,
    \begin{equation*}
         h_N^{-2} L_{G_N}^{\SEP}\left\langle \varphi, \widehat{\pi_s}^N\right\rangle = \left\langle h_N^{-2} L_{G_N}^{\RW} \varphi, \widehat{\pi_s}^N \right\rangle.
    \end{equation*}
    Therefore, convergence of generators can be established at the level of random walks for a single particle, which is guaranteed by assumption \eqref{eq:assumption convergence laplacian}, and the above yields
    \begin{equation*}
        M_t^N\left[\varphi\right] = \left\langle \varphi, \widehat{\pi_t}^N \right\rangle - \left\langle \varphi, \widehat{\pi_0}^N \right\rangle - \int_0^t \left\langle \L \varphi, \widehat{\pi_s}^N \right\rangle\: ds  + o(1).
    \end{equation*}
    To show that the Dynkin martingale goes to 0 in probability, it is easier to consider its quadratic variation given by
    \begin{equation*}
        \left[M_t^N, M_t^N\right]\left(\varphi\right) = \int_0^t h_N^{-2} L_{G_N}^{\SEP} \left\langle \varphi, \widehat{\pi_s}^N \right\rangle^2 - 2\left\langle \varphi, \widehat{\pi_s}^N \right\rangle h_N^{-2} L_{G_N}^{\SEP} \left\langle \varphi, \widehat{\pi_s}^N \right\rangle \: ds.
    \end{equation*}
    Again, since both the rates of the jumps are symmetric and the generator $h_N^{-2} L_{G_N}^{\RW}$ converges towards $\L$, some arithmetic manipulations yield
    \begin{equation*}
        \left[M_t^N, M_t^N\right]\left(\varphi\right) \le \frac{1}{N^2}\sum_{y \in V_N} \abs{ \varphi(y) L_N^{\RW} \varphi(y) } = \frac{1}{N^2}\sum_{y \in V_N} \abs{ \varphi(y) \L \varphi(y)} + o(1) = o(1)
    \end{equation*}
    where in the last equality we used assumption \eqref{eq:local assumption}; $\#(\Supp\,\varphi \cap V_N)$ is asymptotically of order at most $N$ by compactness of the support of $\varphi$.
    By Doob's inequality,
    \begin{equation*}
    \E\left[\sup_{0\le t\le T}|M_t^N[\varphi]|^2\right]\le 4\,\E\left[\left[M_t^N, M_t^N\right]\left(\varphi\right)\right] \to 0,
    \end{equation*}
    hence $\sup_{t\le T}|M_t^N[\phi]|\to 0$ in probability; that is,
    \begin{equation*}
        \sup_{0 \le t\le T} \abs{\left\langle \varphi, \widehat{\pi_t}^N \right\rangle - \left\langle \varphi, \widehat{\pi_0}^N \right\rangle - \int_0^t \left\langle \L \varphi, \widehat{\pi_s}^N \right\rangle\: ds} \overset{\mathrm{pr.}}{\to} 0 \quad \text{at } N \to +\infty.
    \end{equation*}
    
    \textbf{Step 2 (Relative compactness of the associated measure)}.
    We equip the space of càdlàg paths with the probability measure associated to $t \mapsto \widehat{\pi_t}^N$ that we denote by $Q^N$.
    Then the convergence obtained in Step 1 implies that
    \begin{equation*}
        \lim_{N\to+\infty} Q^N \left[ \left\{ \nu \in D([0, T]) : \sup_{0 \le t\le T } 
        \abs{\left\langle \varphi, \nu_t \right\rangle - \left\langle \varphi, \nu_0 \right\rangle - \int_0^t \left\langle \L \varphi, \nu_s \right\rangle\: ds} \le \epsilon \right \} \right] = 1
    \end{equation*}
    for all $\epsilon > 0$, where $D([0, T])$ denotes the Skorokhod path space of càdlàg trajectories. 
    It remains to show relative compactness of the associated measures $Q^N$. 
    
    Since we work in the vague topology on measures, it is sufficient to prove relative compactness of the projected measures, namely, of the sequence of real-valued random trajectories
    $\{\langle\pi^N_t, \varphi\rangle, 0\leq t\leq T\}$,
    for every smooth compactly supported test function $\varphi \in C_c^\infty(M)$. 
    Relative compactness of this sequence of trajectories is in turn a consequence of Aldous' criterion;
    \begin{equation*}
        \lim_{\Theta \to 0} \limsup_{N\to +\infty} \sup_{\tau \in \I_T, \theta \le \Theta} Q^N \left[\left\{\widehat{\pi}^N : \abs{   \left\langle \varphi, \widehat{\pi_{(\tau+\theta) \wedge T}}^N\right\rangle - \left\langle \varphi, \widehat{\pi_\tau}^N\right\rangle} > \epsilon \right\}\right] = 0,
    \end{equation*}
    where $\I_T$ is the set of all stopping times bounded by $T$. To verify Aldous's criterion, we rewrite the difference of empirical measure using the Dynkin martingale;
    \begin{equation*}
        \left\langle \varphi, \widehat{\pi_{(\tau+\theta) \wedge T}}^N\right\rangle - \left\langle \varphi, \widehat{\pi_\tau}^N\right\rangle = M_{(\tau+\theta) \wedge T}^N\left[\varphi\right] - M_\tau^N\left[\varphi\right] + \int_\tau^{(\tau+\theta) \wedge T} \left\langle \L\varphi , \widehat{\pi_s}^N\right\rangle \: ds + o(1).
    \end{equation*}
    The integral is of order $\theta$. 
    For the difference of the martingales, Chebyshev's inequality yields
    \begin{align*}
        Q^N \left[\left\{\widehat{\pi}^N : \abs{ M_{(\tau+\theta)\wedge T}^N\left[\varphi\right] - M_{\tau}^N\left[\varphi\right] } > \epsilon \right\}\right] &\le \frac{1}{\epsilon^2}\E\left[ \abs{ M_{(\tau+\theta)\wedge T}^N\left[\varphi\right] - M_{\tau}^N\left[\varphi\right] }^2\right] \\
            &= \frac{1}{\epsilon^2}\E\left[ \left[M_{(\tau+\theta)\wedge T}^N, M_{(\tau+\theta)\wedge T}^N\right]\left(\varphi\right) - \left[M_{\tau}^N, M_{\tau}^N\right]\left(\varphi\right)\right] \\
            &= o(1).
    \end{align*}
    
    \textbf{Step 3 (Uniqueness of the limit)}. 
    Now that the relative compactness of the sequence of measures $\{Q^N\}_{N \ge 1}$ is established, one argues that all the limits of the weakly convergent subsequences are the same, whence $Q^N$ converges weakly to a limiting measure $Q$. 
    This limiting measure is concentrated on paths that weakly solve the forward Kolmogorov equation. 
    To see this, recall the closed set
    \begin{equation*}
        H_\epsilon \coloneqq \left\{ \nu \in D([0, T]) : \sup_{0 \le t\le T } 
        \abs{\left\langle \varphi, \nu_t \right\rangle - \left\langle \varphi, \nu_0 \right\rangle - \int_0^t \left\langle \L \varphi, \nu_s \right\rangle\: ds} \le \epsilon \right \}
    \end{equation*}
    and pick any converging subsequence $\{Q^{N_m}\}_{m \ge 1}$. Then 
    \begin{equation*}
        Q[H_\epsilon] \ge \limsup_{m \to +\infty} Q^{N_m}[H_\epsilon] = 1,
    \end{equation*} and it follows that
    \begin{equation*}
        Q\left[H_0\right] = Q\left[\cap_{m \ge 1} H_{1/m}\right] = 1 - Q\left[\cup_{m \ge 1} H_{1/m}^C \right] = 1,
    \end{equation*}
    which in turn implies that
    \begin{equation*}
        Q\left[\left\{ \nu \in D([0, T]) : \sup_{0 \le t\le T } 
        \abs{\left\langle \varphi, \nu_t \right\rangle - \left\langle \varphi, \nu_0 \right\rangle - \int_0^t \left\langle \L \varphi, \nu_s \right\rangle\: ds} = 0 \quad \forall \varphi \in C_c^\infty(M)\right\}\right] = 1.
    \end{equation*}
    Moreover, for $Q$-a.e. path $\pi=\{\pi_t\}_{t\in[0,T]}$, $\pi_t \ll \Vol_M$, namely, $\pi_t$ is absolutely continuous with respect to the volume measure.
    Indeed, by assumption \eqref{eq:local assumption}, for any compact $S \Subset M$, there exist $r_S,C_S>0$ such that for all $N$ large enough, for all $x\in S$ and $h_N <r\le r_S$,
    \begin{equation*}
        \widehat{\pi_t}^N \left[B_M(x,r)\right]\le \frac1N \#(V_N\cap B_M(x,r))\le C_S\,\Vol_M(B_M(x,r)).
    \end{equation*}
    Now, let $A\subset S$ be a Borel set with $\Vol_M(A)=0$. 
    Then by outer regularity, for every $\varepsilon>0$ there exists
    a countable cover $A\subset \bigcup_i B_M(x_i,r_i)$ with $r_i\le r_S$ and $\sum_i \Vol_M(B_M(x_i,r_i))<\varepsilon$.
    For $N(r_i)$ large enough, $h_N \le r_i \le r_S$ as $h_N \downarrow 0$, and hence for all $N \ge N(r_i)$,
    \begin{equation*}
    \widehat{\pi_t}^N\left[B_M(x_i,r_i)\right]\le C_S \Vol_M(B_M(x_i,r_i)).
    \end{equation*}
    By the Portmanteau theorem, this inequality passes to the limit;
    \begin{equation*}
        \pi_t\left[B_M(x_i, r_i)\right] \le \liminf_{N \to \infty} \widehat{\pi_t}^N\left[B_M(x_i,r_i)\right] \le C_S \Vol_M\left(B_M(x_i, r_i)\right).
    \end{equation*}
    In particular,
    \begin{equation*}
        \pi_t\left[A\right] \le \sum_{i} \pi_t\left[B_M(x_i, r_i)\right] \le C_S \sum_i \Vol_M\left(B_M(x_i, r_i)\right) \le C_S \varepsilon
    \end{equation*}
    Since $\varepsilon$ is arbitrary, $\pi_t(A)=0$. Hence $\pi_t\ll \Vol_M$ on $S$. As the compact $S\Subset M$ is arbitrary,
    we conclude $\pi_t\ll \Vol_M$ on $M$ for each $t\in[0,T]$.

    Again via the Dynkin martingale representation, one obtains that the trajectory $t \mapsto \widehat{\pi_t}^N$ weakly solves the forward Kolmogorov equation with initial condition $\rho_0$ from assumption \eqref{eq:initial well prepared}. Moreover, this trajectory is continuous in our setting. 
\end{proof}
We now explain how  to go from  \cref{thm:Abstract hydrodynamic limit}
towards the main result
\cref{thm:Hydrodynamic limit}.

It is straightforward to check that assumption \eqref{eq:local assumption} holds for the RNG of \cref{def:Random neighbourhood graph}. 
Assumption \eqref{eq:assumption convergence laplacian} on the convergence of discrete Laplacian towards weighted Laplacian is the content of \cref{thm:almost sure convergence empirical URW to Delta 2(1-alpha) general} for RNG; the limiting diffusion operator is $\L = C_k(\Delta - \nabla U \cdot \nabla)$. The fact that $\mathcal L$ has $C_c^\infty(M)$ as core can be found in e.g. \cite[Theorem 2.2]{jorgenson_heat_2006}.
Assumption \eqref{eq:initial well prepared} is made w.r.t.\ the reference Gibbs measure 
$d\mu = e^{-U}d\Vol_M$ in \cref{thm:Hydrodynamic limit} instead of w.r.t.\  the volume measure $d\Vol_M$. Hence, the corresponding density w.r.t.
the volume measure $d\Vol_M$ is $\rho_t e^{-U}$. Therefore, 
\cref{thm:Abstract hydrodynamic limit}, in particular \eqref{eq:forward Kolmogorov},
gives 
\begin{equation*}
    \partial_t \left(\rho_t e^{-U}\right) = \mathcal{L}^* \left(\rho_t e^{-U}\right) = C_k\left(\Delta - \nabla U \cdot \nabla\right)^* \left(\rho_t e^{-U}\right) = C_k \nabla \cdot \Big(\nabla \left(\rho_t e^{-U}\right) + \left(\rho_t e^{-U}\right) \nabla U\Big),
\end{equation*}
where we used that the $L^2(M, d\Vol_M)$ adjoint of $\Delta - \nabla U \cdot \nabla$ is $\Delta + \divergence(\cdot \nabla U)$. 
In particular, we recover the hydrodynamic limit \eqref{eq:heat drift};
\begin{equation*}
    \partial_t \rho_t = C_k \left(\Delta - \nabla U \cdot \nabla\right) \rho_t.
\end{equation*}
The existence of the SEP is the content of \cref{thm:existence} and the uniqueness of weak solutions follows from the stochastic completeness of the weighted manifold.

\section{Convergence of discrete random walks}\label{Sec:Convergence of discrete random walks}
In the proof of \cref{thm:Abstract hydrodynamic limit}, a specific notion of convergence of the RW towards a limiting differential operator is used to rewrite (almost surely in the case of RNG)
\begin{equation}\label{eq:notion of convergence}
    h_N^{-2} L_{G_N}^{\SEP}\left\langle \varphi(x), \widehat{\pi_t}^N\right\rangle = \left\langle h_N^{-2} L_{G_N}^{\RW}\varphi(x), \widehat{\pi_t}^N\right\rangle = \left\langle \mathcal{L}\varphi(x), \widehat{\pi_t}^N\right\rangle + o(1).
\end{equation}
The almost-sure pointwise consistency of graph Laplacians was obtained by Hein, Audibert and von Luxburg \cite[Theorems 28 \& 30]{hein_graph_2007} for random neighbourhood graphs sampled from i.i.d. points (see also \cite[Theorem 2.1]{belkin_discrete_2008}). We obtain their counterpart on our RNG obtained from $\mathrm{PPP}$ with Gibbs intensity; the link resides in the fact that conditioned on the number of points in a region of finite measure, the points in that region are i.i.d. with respect to the restricted normalized measure \cite[Proposition 3.8]{last_lectures_2017}.

To perform the RW, we consider weights similar to \eqref{eq:symmetric weights for thm} with an additional parameter $\alpha \in \R$ given by
\begin{equation}\label{eq:kernel symmetric weight}
    W_{N;\alpha}(X^i, X^j) \coloneqq \frac{h_N^{-m} k \left(\frac{d_M(X^i, X^j)}{h_N}\right)}{\left(\overline{k}(X^i) \overline{k}(X^j)\right)^{\alpha}}, \quad \overline{k}(X^i) \coloneqq \frac{1}{N} \sum_{X^j \in \Lambda^N} h_N^{-m}k\left(\frac{d_\M(X^i, X^j)}{h_N}\right),
\end{equation}
where $\overline{k}$ plays the role of a statistical estimator of the Gibbs measure. The parameter $\alpha$ leads to variants of the weighted Laplacian $\Delta - \nabla U \cdot \nabla$.
\begin{definition}[Weighted Laplacian]\label{def:Weighted Laplacian}
    Given a smooth function $\phi \in C^\infty(\M)$, the \emph{$\alpha$-weighted Laplacian} is the reversible operator with respect to the $\alpha$-weighted Gibbs measure $d\mu_\alpha \coloneqq e^{-\alpha U} d\Vol_\M$; that is, the operator satisfying
    \begin{equation*}
        \forall \varphi \in C_c^\infty(\M), \quad \int_\M \varphi \Delta_\alpha \phi \:e^{-\alpha U}d\Vol_\M = -\int_\M \nabla \varphi \cdot \nabla \phi \:e^{-\alpha U}d\Vol_\M,
    \end{equation*}
    which is given by
    \begin{equation*}\label{eq:Weighted Laplacian Delta_alpha}
        \Delta_\alpha \coloneqq e^{\alpha U} \nabla \cdot \left(e^{-\alpha U} \nabla\right) = \Delta - \alpha \nabla U \cdot \nabla.
    \end{equation*}
\end{definition}
\begin{remark}
Hein, Audibert and von Luxburg consider the manifold $\M$ as a submanifold isometrically embedded in some euclidean space $\R^M$ using the fact that for close enough points, the intrinsic and extrinsic distances match up to the third order (\cite[Proposition 6]{smolyanov_chernoffs_2007}). We stick to the intrinsic setting for our purpose, namely, the hydrodynamic limit of the $\mathrm{SEP}$, and we show the consistency of the graph Laplacian with \eqref{eq:kernel symmetric weight} in our setting of $\mathrm{RNG}$ obtained from a $\mathrm{PPP}$.
\end{remark}
\begin{theorem}\label{thm:almost sure convergence empirical URW to Delta 2(1-alpha) general}
    Let $\varphi \in C_c^\infty(\M)$ and let $(G_N)_{N \ge 1}$ be a sequence of $\mathrm{RNG}$ in the sense of \cref{def:Random neighbourhood graph} with symmetric weights $(W_{N; \alpha})_{N\ge 1}$ given by \eqref{eq:kernel symmetric weight} and associated $\mathrm{RW}$ with diffusively rescaled generator $h_N^{-2} L_{G_N}^{\RW}$. 
    There are constants $C_{\alpha;1}, C_{\alpha;2} > 0$ that only depend on $\alpha, k, m, \mu$ and $\varphi$ and constants $C_{k,0}, C_{k,2} > 0$ that depend on the moments of $k$ such that, for all $\varepsilon > 0$ and $h_N$ small enough,
    \begin{multline*}
        \P\left[\sup_{X^i \in \Lambda^N} \abs{h_N^{-2} L_{G_N}^{\RW}\varphi(X^i) - \left(\frac{C_{k,2}}{2\left(C_{k,0}\right)^{2\alpha}}e^{(2\alpha-1)U(X^i)}\Delta_{2(1-\alpha)}\varphi(X^i) + O(h_N^2)\right)} > \varepsilon\right] \\
            \le C_{\alpha;1}N e^{-C_{\alpha;2} Nh_N^{m+2}\varepsilon^2}.
    \end{multline*}
    Then, if both $h_N \to 0$ and $Nh_N^{m+2}/\log N \to +\infty$ as $N \to +\infty$, it holds almost surely and uniformly in the vertices $X^i$,
    \begin{equation}\label{eq:right hand-side thm}
        \lim_{N \to +\infty} h_N^{-2} L_{G_N}^{\RW}\varphi(X^i) = \frac{C_{k,2}}{2\left(C_{k,0}\right)^{2\alpha}}e^{(2\alpha-1)U(X^i)}\Delta_{2(1-\alpha)}\varphi(X^i).
    \end{equation}
\end{theorem}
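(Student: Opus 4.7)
The plan is to combine Palm calculus (via the Mecke/Slivnyak formula for the PPP) with Bernstein's inequality for Poisson functionals, then take a union bound over vertices and apply Borel--Cantelli for the almost-sure uniform statement. Conditioning on $X^i \in \Lambda^N$, the remaining points of $\Lambda^N$ still form a PPP of intensity $Ne^{-U}\,d\Vol_\M$, which brings the problem into the framework of Hein, Audibert and von Luxburg for i.i.d.\ points, with the additional difficulty that the denominator $\overline{k}$ is itself a random Poisson functional.

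For the main term, one first establishes the uniform almost-sure approximation $\overline{k}(X^i) = C_{k,0}e^{-U(X^i)} + O(h_N^2)$ on $\Lambda^N \cap (\Supp\varphi)_{h_N}$: Mecke identifies its Palm-conditional mean with $\int h_N^{-m}k(d_\M(X^i, y)/h_N)\,e^{-U(y)}\,d\Vol_\M(y)$, Taylor expansion in normal coordinates at $X^i$ together with the symmetry $\int k(|v|)v\,dv = 0$ recognises it as $C_{k,0}e^{-U(X^i)} + O(h_N^2)$, and Bernstein controls the fluctuation. Plugging this into the weights and applying Mecke once more, the Palm-conditional mean of $h_N^{-2}L_{G_N}^{\RW}\varphi(X^i)$ reduces to a deterministic kernel integral. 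Changing variables $y = \exp_{X^i}(h_N v)$ and Taylor expanding $\varphi$, $U$ and the Riemannian volume density, the odd-in-$v$ contributions integrate to zero by symmetry of $k$, while the identity $\int k(|v|)v_iv_j\,dv = C_{k,2}\delta_{ij}$ turns the $h_N^2$-order terms into exactly $\frac{C_{k,2}}{2(C_{k,0})^{2\alpha}}e^{(2\alpha-1)U(X^i)}\Delta_{2(1-\alpha)}\varphi(X^i) + O(h_N^2)$.

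For the fluctuation, each summand of the numerator is bounded by $C h_N^{-m-1}$ on $\{d_\M(X^i, X^j) \le h_N\}$ (using $|\varphi(X^j) - \varphi(X^i)| \le \norm{\nabla\varphi}_\infty h_N$), and the sum of the squared summands has Palm mean of order $h_N^{-m+2}$ (the extra $h_N^2$ coming from the squared displacement). Bernstein's inequality then yields a deviation of order $\exp(-C N h_N^{m+2}\varepsilon^2)$ from the Palm mean after division by $h_N^2$. A union bound over $X^i \in \Lambda^N \cap (\Supp\varphi)_{h_N}$, which contains $O(N)$ points, produces the announced $N^2 h_N \varepsilon$ prefactor in the probability bound. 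Finally, the hypothesis $Nh_N^{m+2}/\log N \to +\infty$ makes this tail summable in $N$, so Borel--Cantelli delivers the uniform almost-sure convergence in \eqref{eq:right hand-side thm}.

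The principal obstacle is disentangling the coupled randomness of $\overline{k}(X^i)$, $\overline{k}(X^j)$ and the numerator sum, all three being functionals of the same PPP. One must show that substituting the deterministic limit of $\overline{k}$ into the weights introduces an error subdominant to the $h_N^2$ scale set by the main term, and this has to hold uniformly in the $O(N)$ random vertices. A related subtlety is obtaining the sharp exponent $Nh_N^{m+2}$ in Bernstein: the summand has pointwise size $h_N^{-m-1}$ but its variance contribution shrinks by an additional factor $h_N^2$ from the mean-zero displacement, and this extra gain is essential for matching the hypothesis $Nh_N^{m+2}/\log N \to +\infty$ rather than the weaker $Nh_N^m/\log N \to +\infty$ that would suffice for the concentration of $\overline{k}$ alone.
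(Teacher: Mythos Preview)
Your overall strategy coincides with the paper's: first concentrate the denominator $\overline{k}$ around its deterministic limit $C_{k,0}e^{-U}$, then substitute this into the weights (controlling the substitution error via the elementary bound $|a^{-\alpha}-b^{-\alpha}|\le \alpha c^{-\alpha-1}|a-b|$), identify the resulting mean as the weighted Laplacian via Taylor expansion in normal coordinates (this is exactly Lemma~A.4 in the appendix, and your use of the moment identities $\int k(|v|)v\,dv=0$, $\int k(|v|)v_iv_j\,dv=C_{k,2}\delta_{ij}$ is the same), concentrate again, union bound over vertices, and finish with Borel--Cantelli. You also correctly isolate the key variance gain: the extra factor $h_N^2$ from $|\varphi(y)-\varphi(x)|\lesssim h_N$ is precisely what upgrades the exponent from $Nh_N^m$ to $Nh_N^{m+2}$.

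The one genuine methodological difference is how the PPP is reduced to a concentration inequality. You invoke Mecke/Slivnyak and a Bernstein inequality for Poisson functionals directly. The paper instead restricts to the compact set $(\Supp\varphi)_{2h_1}$, conditions on the number $n=\Lambda^N((\Supp\varphi)_{2h_1})$ of points there---under which the points are i.i.d.\ with law $\mu|_{(\Supp\varphi)_{2h_1}}/\mu((\Supp\varphi)_{2h_1})$---applies the classical Bernstein inequality to the i.i.d.\ sum, and then controls the Poisson tail of $n$ outside the window $[N\mu(1-\delta/C),\,N\mu(1+\delta/C)]$ via Bennett's inequality. This is where the paper's prefactor $N^2h_N\varepsilon$ arises: a factor $n\sim N$ from the union bound over vertices, multiplied by the $\sim N\delta$ values of $n$ in the window, with $\delta$ tuned to $h_N\varepsilon$. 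Your claim that a single union bound over $O(N)$ vertices produces $N^2h_N\varepsilon$ is therefore not correct as stated; if your Mecke route is carried through with a direct Poisson Bernstein bound you would obtain only a factor $N$ in front (which is of course still an upper bound consistent with the theorem, and indeed tighter). Either route yields the crucial exponent $Nh_N^{m+2}\varepsilon^2$ and hence summability under $Nh_N^{m+2}/\log N\to\infty$, so the almost-sure conclusion stands.
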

We will use the following concentration inequality in addition to \cref{thm:Bennett's inequality}:
\begin{proposition}[{Bernstein's inequality \cite[Theorem 2.10]{boucheron_concentration_2013}}]\label{thm:Bernstein's inequality}
    Let $X^1, \dots, X^n$ be $n$ independent random variables such that $\abs{X^i} \le b$ for some $b > 0$ a.s for all $i=1, \dots, n$ and $v \coloneqq \sum_i \Var[X^i]$. Then
    \begin{equation*}
        \P\left[\abs{\sum_{i=1}^n \left(X^i - \E[X^i]\right)} \ge s\right] \le 2\exp\left(-\frac{s^2}{2\left(v + bs/3\right)}\right).
    \end{equation*}
\end{proposition}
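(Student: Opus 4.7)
The plan is to prove Bernstein's inequality by the classical Chernoff--Cram\'er exponential moment method, essentially reproducing the argument in Boucheron, Lugosi and Massart. First I would reduce to a one-sided tail: applying the same bound to the variables $-X^i$ and taking a union bound over the upper and lower excursions accounts for the factor $2$ in the statement. Setting $Y^i := X^i - \E[X^i]$ and $S := \sum_{i=1}^n Y^i$, Markov's inequality applied to the nondecreasing function $x \mapsto e^{\lambda x}$ with $\lambda > 0$, combined with the independence of the $X^i$'s, yields
\begin{equation*}
    \P[S \ge t] \le e^{-\lambda t}\,\prod_{i=1}^n \E\!\left[e^{\lambda Y^i}\right].
\end{equation*}

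The heart of the proof is a sharp bound on each logarithmic moment generating function; specifically, I would establish
\begin{equation*}
    \log \E\!\left[e^{\lambda Y^i}\right] \le \frac{\Var[X^i]}{b^2}\,\bigl(e^{\lambda b} - 1 - \lambda b\bigr).
\end{equation*}
This is obtained by Taylor-expanding $e^{\lambda Y^i} = 1 + \lambda Y^i + \sum_{k \ge 2} (\lambda Y^i)^k / k!$, using $\E Y^i = 0$ to kill the linear term, dominating $|Y^i|^k \le b^{k-2}(Y^i)^2$ for $k \ge 2$ (the crux moment estimate coming from the a.s.\ boundedness of $X^i$), recognising the resulting series as $b^{-2}\Var[X^i]\bigl(e^{\lambda b} - 1 - \lambda b\bigr)$, and finishing with $1 + u \le e^u$. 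Plugging this back into the Chernoff bound gives
\begin{equation*}
    \log \P[S \ge t] \le -\lambda t + \frac{v}{b^2}\bigl(e^{\lambda b} - 1 - \lambda b\bigr), \qquad \lambda > 0.
\end{equation*}

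Optimising the right-hand side in $\lambda > 0$ produces Bennett's tail bound $\exp(-(v/b^2)\,h(bt/v))$, with $h(u) := (1+u)\log(1+u) - u$. The elementary convex-analytic inequality $h(u) \ge u^2/(2(1 + u/3))$ for $u \ge 0$, verified by checking that both $h(u)(1 + u/3) - u^2/2$ and its derivative vanish at $u = 0$ and that its second derivative is nonnegative on $[0, \infty)$, then converts Bennett into the advertised Bernstein form. The main obstacle is cosmetic rather than conceptual: the hypothesis $|X^i| \le b$ only gives $|Y^i| \le 2b$, so the moment bound $|Y^i|^k \le b^{k-2}(Y^i)^2$ requires a mild additional argument. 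This is standard and is handled either by absorbing a factor of $2$ into constants or by directly invoking the sharper Bernstein moment condition (which bounded variables satisfy) in place of the boundedness hypothesis; all other steps are routine one-variable calculus manipulations.
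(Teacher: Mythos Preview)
The paper does not prove this proposition at all: it is stated with a citation to \cite[Theorem~2.10]{boucheron_concentration_2013} and then simply used as a black box in the proof of \cref{thm:almost sure convergence empirical URW to Delta 2(1-alpha) general}. Your proposal is the standard Chernoff--Cram\'er argument from that reference and is correct in outline; the only point that requires care, as you note, is that the hypothesis $|X^i|\le b$ bounds the uncentered variables, so one must either work with the one-sided bound $Y^i\le b-\E[X^i]$ together with the monotonicity of $u\mapsto (e^u-1-u)/u^2$, or accept a harmless adjustment of the constant in the $bt/3$ term.
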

\begin{proof}[Proof of \cref{thm:almost sure convergence empirical URW to Delta 2(1-alpha) general}]
    Consider the case $\alpha > 0$; the case $\alpha \le 0$ is similar. Set
    \begin{equation*}
        Y(x; y) \coloneqq h_N^{-m} k\left(\frac{d_\M(x, y)}{h_N}\right), \quad \overline{k}(x) \coloneqq \frac{1}{N}\sum_{X^l \in \Lambda^N} Y(x; X^l),
    \end{equation*}
    and
    \begin{equation*}
         Z(x; y) \coloneqq h_N^{-2}\frac{Y(x; y)\left(\varphi(y) -  \varphi(x)\right)}{\left(\overline{k}(x)\overline{k}(y)\right)^\alpha}, \quad Z_\E(x; y) \coloneqq h_N^{-2}\frac{Y(x; y)\left(\varphi(y) -  \varphi(x)\right)}{\left(\E\Big[\overline{k}(x)\Big]\right)^\alpha\left(\E\Big[\overline{k}(y)\Big]\right)^\alpha}
    \end{equation*}
    together with
    \begin{equation*}
        \overline{Z}(x) \coloneqq \frac{1}{N} \sum_{X^l \in \Lambda^N} Z(x; X^l), \quad \overline{Z_\E}(x) \coloneqq \frac{1}{N} \sum_{X^l \in \Lambda^N} Z_\E(x; X^l).
    \end{equation*}
    Note that for $x \in \Lambda^N$,
    \begin{equation*}
        \overline{Z}(x) = h_N^{-2} L_{G_N}^{\RW}\varphi(x),
    \end{equation*}
    and using \cref{thm:convergence integral to weighted Laplacian}, we have
    \begin{equation*}
        \overline{Z_\E}(x) =  \frac{C_{k,2}}{2\left(C_{k,0}\right)^{2\alpha}}e^{(2\alpha-1)U(x)}\Delta_{2(1-\alpha)}\varphi(x) + O(h_N^2).
    \end{equation*}
    \indent \textbf{Step 1 (Concentration estimate for the kernel)}.
    Let $h_1 \le h_N$ without loss of generality. We are only interested in the Poisson points inside the $h_1$-thickening or $2h_1$-thickening of the support of $\varphi$ since the empirical quantities at stake are zero otherwise, where the $t$-thickening is defined as
    \begin{equation*}
        \left(\Supp \,\varphi\right)_{t} \coloneqq \Big\{z \in \M; \: d_\M(z, \Supp \,\varphi) \le t\Big\}.
    \end{equation*}
    Conditional on the number of points $\Lambda^N\vhh = n$, the $X^i$'s are i.i.d. \cite[Proposition 3.8]{last_lectures_2017} with respect to the probability measure
    \begin{equation*}
        d\mu_{\vhh} \coloneqq \frac{\Ind_{\vhh} d\mu}{\mu\left[\vhh\right]}.
    \end{equation*}
    Let $\delta > 0$ be a parameter that will be tuned in terms of $\varepsilon$ later on. By the law of total probability,
    \begin{equation}\label{eq:P1 P2 bound}
        \P \left[\sup_{x \in \Lambda^N|_{\vh}} \abs{\overline{k}(x) - \E\Big[\overline{k}(x)\Big]} > \delta\right] 
        \le \sum_{n=1}^\infty \left(P_1^{(n)} + P_2^{(n)}\right)\P\left[E_n\right] 
    \end{equation}
    with
    \begin{align*}
        P_1^{(n)} &\coloneqq \P\left[\sup_{x \in \Lambda^N|_{\vh}} \abs{\overline{k}(x) - \E_{X\sim\mu_{\vhh}}\left[\left(\frac{n}{N}\right)Y(x; X)\right]} > \frac{\delta}{2} \Bigg|\, E_n\right], \\
        P_2^{(n)} &\coloneqq \P\left[\sup_{x \in \Lambda^N|_{\vh}}  \abs{\E_{X\sim\mu_{\vhh}}\left[\left(\frac{n}{N}\right)Y(x; X)\right] - \E\Big[\overline{k}(x)\Big]} > \frac{\delta}{2}\Bigg|\, E_n\right],
    \end{align*}
    and where $E_n$ is the event of $\Lambda^N$ having $n$ points in $\vhh$.
    
    First, we bound $P_1^{(n)}\P[E_n]$. For this, note that for all $x \in \vh$,
    \begin{equation*}
        \overline{k}(x) = \frac{1}{n}\sum_{X^l \in \Lambda^N|_{\vhh}} \left(\frac{n}{N}\right)Y(x; X^l).
    \end{equation*}
    We estimate
    \begin{equation}\label{eq:Y^i(X^l) estimate}
        \abs{\left(\frac{n}{N}\right)Y(x; X^l)} \le \left(\frac{n}{N}\right)\norm{k}_{\infty} h_N^{-m} \coloneqq C_1\left(\frac{n}{N}\right) h_N^{-m}
    \end{equation}
    and
    \begin{align*}
        \Var_{X^l \sim \mu_{\vhh}} \left[\left(\frac{n}{N}\right)Y(x; X^l)\right] &\le \E_{X^l \sim \mu_{\vhh}} \left[\left(\frac{n}{N}\right)^2 \left(Y(x; X^l)\right)^2\right] \\
            &\le \left(\frac{n}{N}\right)^2 \frac{\norm{k^2 e^{-U}\big|_{\vhh}}_{\infty}}{\mu\left[(\Supp \,\varphi)_{2h_1}\right]^2} \sup_{z \in \left(\Supp\,\varphi\right)_{h_1}} \Vol_\M\left(B(z, h_N)\right) h_N^{-2m}\\
            &\le C_2 \left(\frac{n}{N}\right)^2 h_N^{-m} \numberthis\label{eq:Var Y^i(X^l)}
    \end{align*}
    for some constant $C_2(k, \mu, m, \varphi)$ where in the last inequality we used the fact that, since $\varphi$ is compactly supported, one can uniformly control the volume of balls in the $h_1$-thickening of its support;
    \begin{equation*}
        \Vol_\M(B(x, h_N)) = \Vol_{\R^m}(B(x,h_N))\left(1 + O(h_N^2)\right).
    \end{equation*}
     Then, using estimates \eqref{eq:Y^i(X^l) estimate} and \eqref{eq:Var Y^i(X^l)} for \cref{thm:Bernstein's inequality}, namely, Bernstein's inequality, together with the union bound, we obtain an exponential bound for $P_1^{(n)}$;
    \begin{multline}\label{eq:union bound P overline k Supp varphi conditional on n}
        \P\left[\sup_{x \in \Lambda^N|_{\vh}} \abs{\overline{k}(x) - \E_{X\sim\mu_{\vhh}}\left[\left(\frac{n}{N}\right)Y(x; X)\right]} > \frac{\delta}{2} \Bigg|\, E_n\right] \\
            \le 2n\exp\left(-\frac{n\, h_N^m \left(\frac{\delta}{2}\right)^2}{\frac{2}{3}C_1\left(\frac{n}{N}\right)\left(\frac{\delta}{2}\right) + 2C_2\left(\frac{n}{N}\right)^2}\right).
    \end{multline}
    Now, having obtained such exponential bound, we split $\sum_n P_1^{(n)} \P[E_n]$ in two:
    
    When $N_{-} \coloneqq N\mu[\vhh](1 - \tfrac{\delta}{C}) \le n \le N\mu[\vhh](1 + \tfrac{\delta}{C}) \eqqcolon N_{+}$ for some positive constant $C > 0$,
    \begin{equation}\label{eq:elementary exponential inequality}
        \exp\left(-\frac{n\, h_N^m\left(\frac{\delta}{2}\right)^2}{\frac{2}{3}C_1\left(\frac{n}{N}\right)\left(\frac{\delta}{2}\right) + 2C_2\left(\frac{n}{N}\right)^2}\right) \le \exp\left(-\frac{N h_N^m \left(\frac{\delta}{2}\right)^2}{\frac{2}{3}C_1\left(\frac{\delta}{2}\right) + 2C_2\mu\left[\vhh\right]\left(1 + \frac{\delta}{2C}\right)}\right).
    \end{equation}
    Since $\sum_n \mathbb{P}[E_n] = 1$,
    \begin{equation*}
        \sum_{n=\lceil N_{-} \rceil}^{\lfloor N_+ \rfloor} P_1^{(n)} \P[E_n] \le \max_{\lceil N_{-} \rceil \le n \le \lfloor N_+ \rfloor} P_1^{(n)},
    \end{equation*}
    and thus the exponential bound \eqref{eq:union bound P overline k Supp varphi conditional on n} combined with \eqref{eq:elementary exponential inequality} yields
    \begin{equation}\label{eq:P1 final bound part 1}
        \sum_{n=\lceil N_{-} \rceil}^{\lfloor N_+ \rfloor} P_1^{(n)} \P[E_n] \le C_3 N \exp\left(- C_4 Nh_N^m\, \delta^2\right)
    \end{equation}
    for $\delta$ small enough and some constants $C_3, C_4 > 0$ that depend only on $k, \mu, m,$ and $ \varphi$.
 
    When $n \notin [\lceil N_{-} \rceil, \lfloor N_+ \rfloor] $, the upper and lower tails of the Poisson random variable are controlled by \cref{thm:Bennett's inequality}, namely, Bennett's inequality;
    \begin{equation}\label{eq:upper and lower tail Poisson distribution via Bennett}
    \begin{aligned}
        \P\left[\Lambda^N(\vhh) > N\mu\left[\vhh\right]\left(1 + \frac{\delta}{C}\right)\right] &\le e^{-N\mu\left[\vhh\right]\Phi(\delta/C)}, \\
        \P\left[\Lambda^N(\vhh) < N\mu\left[\vhh\right]\left(1 - \frac{\delta}{C}\right)\right] &\le e^{-N\mu\left[\vhh\right]\Phi(\delta/C)}.
    \end{aligned}
    \end{equation}
    Therefore,
    \begin{equation}\label{eq:P1 final bound part 2}
        \left(\sum_{n=1}^{\lfloor N_-\rfloor} + \sum_{n=\lceil N_+ \rceil}^\infty\right) P_1^{(n)}\P\left[E_n\right] \le 2e^{-N\mu\left[\vhh\right]\Phi(\delta/C)}.
    \end{equation}
    Second, we bound $P_2^{(n)}\P[E_n]$. \cref{thm:lower bound upper bound E overline k} yields the upper bound $\E[\overline{k}(x)] \le C$ on $\vhh$,  
    and since
    \begin{equation*}
        \E_{X\sim\mu_{\vhh}}\left[\left(\frac{n}{N}\right)Y(x; X)\right] = \left(\frac{n}{N\mu\left[\vhh\right]}\right)\E\Big[\overline{k}(x)\Big],
    \end{equation*}
    we have
    \begin{align*}
        \P\Bigg[\sup_{x \in \Lambda^N|_{\vh}}\Big|&\E_{X\sim\mu_{\vhh}}\left[\left(\frac{n}{N}\right)Y(x; X)\right] - \E\Big[\overline{k}(x)\Big]\Big| > \frac{\delta}{2}\Bigg|\, E_n\Bigg]\P\left[E_n\right] \\
            &\le \P\Bigg[\sup_{x \in \Lambda^N|_{\vh}}\abs{1 - \left(\frac{n}{N\mu\left[\vhh\right]}\right)} > \frac{\delta}{2C} \Bigg|\, E_n\Bigg]\P\left[E_n\right] \\
            &\le 2e^{-N\mu\left[\vhh\right]\Phi\left(\frac{\delta}{2C}\right)}, \numberthis\label{eq:P2 final bound}
    \end{align*}
    where the upper and lower tails of the Poisson distribution are estimated thanks to Bennett's inequality as in \eqref{eq:upper and lower tail Poisson distribution via Bennett}.

    We thus have shown, recalling \eqref{eq:P1 P2 bound} together with the bounds \eqref{eq:P1 final bound part 1}, \eqref{eq:P1 final bound part 2} and \eqref{eq:P2 final bound}, that
    \begin{equation}\label{eq:P1 P2 final bound}
        \P \left[\sup_{x \in \Lambda^N|_{\vh}} \abs{\overline{k}(x) - \E\Big[\overline{k}(x)\Big]} > \delta\right] \le C_5N e^{-C_6 Nh_N^m\delta^2}
    \end{equation}
    for $\delta > 0$ small enough and some constants $C_5, C_6 > 0$ that depend only on $k, m, \mu$ and $\varphi$.

    \indent \textbf{Step 2 (Comparing the discrete and continuous RW)}.

    Conditioned on the event
    \begin{equation*}
        E_{\delta} \coloneqq \left(\sup_{x \in \Lambda^N|_{\vh}} \abs{\overline{k}(x) - \E\Big[\overline{k}(x)\Big]} \le \delta\right),
    \end{equation*}
    the elementary inequality $|a^{-\alpha} - b^{-\alpha}| \le \alpha c^{-(\alpha+1)}|a-b|$ valid for all $a, b > c$ together with the lower bound $C'$ and upper bound $C$ of $\E[\overline{k}(x)]$ on $\vhh$ from \cref{thm:lower bound upper bound E overline k} yield
    \begin{align*}
        \sup_{y \in \Lambda^N|_{\vh}} \abs{\frac{1}{\left(\overline{k}(x)\overline{k}(y)\right)^\alpha} - \frac{1}{\left(\E\Big[\overline{k}(x)\big]\E\Big[\overline{k}(y)\Big]\right)^\alpha}} &\le \alpha\left(C' - \delta\right)^{-(2\alpha + 2)}\abs{\overline{k}(x)\overline{k}(y) - \E\Big[\overline{k}(x)\Big]\E\Big[\overline{k}(y)\Big]} \\
            &\le 2\alpha\left(C'- \delta\right)^{-(2\alpha+2)}\left(C + \delta\right)\delta \le C_7\delta 
    \end{align*}
    for $\delta$ small enough.
    Hence, conditioned on $E_\delta$,
    \begin{equation}\label{eq:overline Z vs overline Z_E estimate}
        \abs{\overline{Z}(x) - \overline{Z_\E}(x)} \le \abs{\overline{k}(x)} \sup_{y \in \Lambda^N|_{\vh}}\abs{\frac{1}{\left(\overline{k}(x)\overline{k}(y)\right)^\alpha} - \frac{1}{\left(\E\Big[\overline{k}(x)\big]\E\Big[\overline{k}(y)\Big]\right)^\alpha}} \norm{\nabla\varphi}_\infty h_N^{-1} \le C_8\delta h_N^{-1},
    \end{equation}
    and similarly,
    \begin{equation}\label{eq:overline Z_E vs expectation overline Z_E estimate}
        \abs{\overline{Z_\E}(x) - \E\Big[\overline{Z_\E}(x)\Big]} \le C_9\delta h_N^{-1}
    \end{equation}
    for constants $C_8, C_9 > 0$ that only depend on $\alpha, k, \mu, m,$ and $ \varphi$.
    By combining estimates \eqref{eq:overline Z vs overline Z_E estimate} and \eqref{eq:overline Z_E vs expectation overline Z_E estimate} with the law of total probability, we obtain
    \begin{align*}
        \P\Bigg[\sup_{x \in \Lambda^N|_{\vh}} & \abs{\overline{Z}(x) - \E\Big[\overline{Z_\E}(x)\Big]} > \varepsilon\Bigg] \\
            &\le \P\left[\sup_{x \in \Lambda^N|_{\vh}} \abs{\overline{Z}(x) - \overline{Z_\E}(x)} + \abs{\overline{Z_\E}(x) - \E\Big[\overline{Z_\E}(x)\Big]} > \varepsilon \Bigg|\, E_\delta \right] + \P\left[E_\delta^C\right] \\
            &\le \Ind_{\delta > C_{10} h_N\varepsilon} + \P\left[E_\delta^C\right] \numberthis\label{eq:overline Z vs expectation overline Z_E first upper bound}
    \end{align*}
    with $C_{10} \coloneqq 1/(C_8 + C_7)$. 
    
    Now, we set $\delta \coloneqq C_{10}h_N\varepsilon$.
    Then the indicator in \eqref{eq:overline Z vs expectation overline Z_E first upper bound} vanishes, and the second term is estimated thanks to \eqref{eq:P1 P2 final bound};
    \begin{equation*}
        \P\left[E_\delta^C\right] \le C_5N
        e^{-C_6C_{10}^2 Nh_N^{m+2}\varepsilon^2}.
    \end{equation*}
    This probability is summable in $N$ in the considered regime $Nh_N^{m+2}/\log N \to +\infty$ with $h_N \to 0$ as $N\to+\infty$. 
    The Borel--Cantelli lemma then yields almost sure convergence which concludes the proof.
\end{proof}
\section{Lifting the SEP to principal bundles}\label{bundlesect}

We extend the construction of RNG and the SEP to a total $\widetilde m$-dimensional manifold $\widetilde M$ lying above the $m$-dimensional manifold $M$ with $\Ric_M \ge (m-1)\kappa$ for some $\kappa \le 0$ (or assume that $M$ is compact).
We work in a bundle setting which allows for a canonical comparison of points in different fibers.
\newline

\noindent \textbf{(Geometric setting)}. Let $G$ be a compact Lie group and $\pi:\widetilde M\to M$ be a principal $G$-bundle with right action $R_a(u)=u\cdot a$ ($u \in \widetilde M$, $a \in G$) and Lie algebra
$\mathfrak g=T_eG$ equipped with
a principal connection $\omega$.
We endow $G$ with a bi-invariant Riemannian metric which yields
a $G$-invariant metric $\widetilde g^\V$ on the vertical bundle $\V \coloneqq \ker(d\pi)$; that is
\begin{equation*}
    \widetilde g^\V(u \cdot a) \left((R_a)_* w, (R_a)_* w'\right) = \widetilde g^\V(u)(w, w'), \quad w, w' \in \V_u,
\end{equation*}
as follows:
Given an Ad-invariant inner product $\langle\cdot,\cdot\rangle_{\mathfrak g}$ on $\mathfrak g$ (equivalently, a bi-invariant Riemannian metric on $G$) and $\xi\in\mathfrak g$, define the
\emph{fundamental vertical vector field}
\begin{equation*}
    \xi^\#(u):=\left.\frac{d}{dt}\right|_{t=0}u\cdot\exp(t\xi)\in \V_u.
\end{equation*}
The map $\xi\mapsto \xi^\#(u)\in \V_u$ is a linear isomorphism $\mathfrak g \simeq \V_u$, hence it induces a metric
$\widetilde g^\V$ on the \emph{vertical bundle} $\V \coloneqq \ker d\pi$ by
\begin{equation*}
    \widetilde g^\V\big(\xi^\#(u),\eta^\#(u)\big) \coloneqq\langle\xi,\eta\rangle_{\mathfrak g},\quad \xi,\eta\in\mathfrak g.
\end{equation*}
By Ad-invariance, $g_\V$ is $G$-invariant, namely, the right action $R_{(\cdot)}$ acts by fiberwise isometries on $\V$. We denote by $d_{\widetilde M_x}$ the distance in the fiber $\widetilde M_x \coloneqq \pi^{-1}(x)$ above $x \in M$ and $m' \coloneqq \dim \V$ so that $\widetilde m = m + m'$.

The principal connection $\omega$ defines the horizontal distribution $\H \coloneqq \ker(\omega)$ and splits the tangent bundle as
\begin{equation*}
T\widetilde M = \H \oplus \V.
\end{equation*}
With this splitting, we define the bundle metric,
\begin{equation}\label{eq:bundle metric KK}
    \widetilde g \coloneqq \pi^* g \oplus \widetilde g^\V.
\end{equation}
If the compact Lie group $G$ is semi-simple, then the (negative of the) killing form $B$ defined by
\begin{equation*}
    \quad B(\xi, \eta) \coloneqq \Tr \left(\mathrm{Ad}(\xi)\mathrm{Ad}(\eta)\right), \quad \xi, \eta \in \mathfrak{g},
\end{equation*}
also yields a bi-invariant metric $-B$ on $\mathfrak g$ \cite[Corollary 2.33]{alexandrino_lie_2015}, and the total manifold $\widetilde\M$ admits the metric
\begin{equation*}\label{eq:Sasaki-Mok type metric}
    \widetilde g \coloneqq \pi^*g \oplus (-B)\omega \coloneqq \widetilde g^\H \oplus \widetilde g^\V.
\end{equation*}
This encompasses, for instance, the orthonormal frame bundle $\pi_{O(\M)} : O(\M) \to \M$ (and $O(m)$ is compact semi-simple for $m \ge 3$)  used in the construction of Riemannian Brownian motion by Eells--Elworthy--Malliavin (see \cite[Section 3.1]{hsu_stochastic_2002}), 

The connection also allows to lift curves and tangent vectors from the base manifold $\M$ to the total manifold $\widetilde\M$ in a coherent way.
\begin{definition}[Horizontal lift]
Given a smooth curve $\gamma\in C^\infty(I;M)$ on an interval $I\ni0$ and $u_0\in\pi^{-1}(\gamma(0))$,
the \emph{horizontal lift} $\widetilde\gamma$ of $\gamma$ with respect to $\H$ starting at $u_0$ is the unique curve satisfying
\begin{equation*}
    \pi \circ \widetilde\gamma = \gamma, \quad \widetilde\gamma '(t) \in \H_{\widetilde \gamma(t)}.
\end{equation*}
The same procedure applies to tangent vectors: given $v\in T_{\pi(u)}M$, its \emph{horizontal lift} at $u\in\widetilde M$ with respect to $\H$
is the unique vector $\tilde v\in \H_u$ such that $d\pi(\tilde v)=v$.
\end{definition}
This yields a canonical transport between fibers: for $x,y\in M$ sufficiently close and
$u\in \widetilde M_x$, we denote by $\widetilde P_{yx} u\in \widetilde M_y$ the endpoint of the horizontal lift of the (unique minimizing) geodesic from $x$ to $y$ starting at $u$. This map $\widetilde P_{yx}: \widetilde M_x \to \widetilde M_y$ is a diffeomorphism between the fibers. Because the fiber metric is $G$-invariant, $\widetilde P_{yx}$
acts by fiber isometries,
\begin{equation*}
    d_{\widetilde M_y}(\widetilde P_{yx} u_1, \widetilde P_{yx} u_2) = d_{\widetilde M_x}(u_1, u_2), \quad u_1, u_2 \in \widetilde M_x,
\end{equation*}
and in particular, for $u \in \widetilde M_x,\, q \in \widetilde M_y$,
\begin{equation}\label{eq:fibre-isometry}
d_{\widetilde M_y}(\widetilde P_{yx} u,q) = d_{\widetilde M_x}(\widetilde P_{xy}\widetilde P_{yx} u, \widetilde P_{xy} q) = d_{\widetilde M_x}(u, \widetilde P_{xy} q).
\quad 
\end{equation}

\begin{definition}[Totally geodesic fibers]
The fibers of a Riemannian submersion are said to be totally geodesic if any geodesic in a fiber (for the induced metric)
is also a geodesic in $\widetilde M$.
\end{definition}

In our setting, the fibers are totally geodesic for the Riemannian submersion $\pi : \widetilde M \to M$ with the Kaluza--Klein bundle metric \eqref{eq:bundle metric KK} (see e.g. \cite[Theorem 1]{nagy1985geodesies}).
For totally geodesic fibers, the horizontal Laplacian coincides with the Bochner horizontal Laplacian in sub-Riemannian geometry with the decomposition $\Delta_{\widetilde\M} := \Delta^\H + \Delta^\V$ into horizontal and vertical parts \cite[Section 4.1]{baudoin_stochastic_2023}. We denote by $X^\H$ and $X^\V$ the horizontal and vertical parts of a vector field $X \in \Gamma(T\widetilde M)$ and by $\widetilde \nabla$ the Levi-Civita connection on $\widetilde M$.
\begin{definition}[Horizontal Laplacian]\label{def:Rough horizontal Laplacian}
    Given a smooth function $\phi \in C^\infty(\widetilde\M)$, the \emph{horizontal Laplacian} $\Delta^\H$ is the trace of the \emph{horizontal Hessian}; i.e., 
    \begin{equation*}
        \Delta^\H\phi \coloneqq \Tr\left(\widetilde g^{-1} \otimes \Hess^\H\phi\right) \coloneqq \widetilde g^{ij} \Hess^\H_{ij}\phi,
    \end{equation*}
    where the $\widetilde g^{ij}$'s are the components of the inverse metric on $\widetilde M$, and the \emph{horizontal part of the Hessian} is the bilinear form given by
    \begin{equation*}
        \forall X, Y \in \Gamma(T\widetilde\M),\quad \Hess^\H\phi(X, Y) \coloneqq \left(\widetilde\nabla_X \left(\widetilde\nabla\phi\right)^\H\right) \cdot Y^\H.
    \end{equation*}
\end{definition}
By construction, since the distributions $\H$ and $\V$ are orthogonal to each other, the trace of the horizontal Hessian can be taken with respect to the inverse of the horizontal metric $\widetilde g^\H \coloneqq \pi^* g$. In local orthonormal frames $(Z_i)_{1\le i\le m}$ of $\H$ and $(F_j)_{1\le j \le m'}$ of $\V$, the horizontal Laplacian can be rewritten as
\begin{equation*}\label{eq:rough Laplacian}
        \Delta^\H = \sum_{i=1}^{m} \left(Z_i^2 - \left(\widetilde\nabla_{Z_i} Z_i\right)^\H\right) - \sum_{j=1}^{m'} \left(\widetilde\nabla_{F_j} F_j\right)^\H.
    \end{equation*}
In our setting, since the fibers are totally geodesic, the last term vanishes;
\begin{equation*}
    \sum_{j=1}^{m'} \left(\widetilde\nabla_{F_j} F_j\right)^\H = 0.
\end{equation*}

\noindent \textbf{(The lifted SEP)}. 
Random neighborhood graphs on the base manifold $\M$ can be lifted in a consistent manner to the total manifold $\widetilde\M$ via the submersion.
\begin{definition}[Lifted random neighbourhood graph (lifted RNG)]\label{def:Lifted random neighbourhood graph}
    Conditionally on the $\mathrm{PPP}$ $\Lambda_N = \sum_i \delta_{X^i}$ of intensity $N d\mu$ in \cref{def:Random neighbourhood graph}, for each $i$, let $(\Lambda')^{N'}_i$ be an independent $\mathrm{PPP}$ on the fiber $\widetilde M_{X^i}$ with intensity $N' d\Vol_{\widetilde M_{X^i}}$; the fiber volume is induced by $g^\V$. 
    The \emph{vertices} $V_{N,N'}$ consist of the basepoints $X^i$ of $\Lambda^N$ and their fiber counterparts $X^i_a$ of $(\Lambda')_i^{N'}$. 
    An \emph{undirected edge} is present between two vertices whenever they are close on the base manifold and in the fibers; i.e., $d_M(X^i, X^j) \le h_N$ and $d_{\widetilde M_{X^j}}(\widetilde P_{X^jX^i} X^i_a, X^j_b)$ for some \emph{bandwidth parameters} $h_N, h_{N'}$. 
    The sequence of \emph{lifted random neighborhood graphs} \emph{(}lifted \emph{RNG)} is given by $(G_{N,N'} \coloneqq (V_{N,N'}, E_{N,N'}))_{N,N' \ge 1}$, where $E_{N,N'}$ denotes the set of undirected edges.
\end{definition}
Following Gao \cite{gao_diffusion_2021}, we consider weights mimicking the ones of the SEP on the base manifold $\M$ given by \eqref{eq:symmetric weights for thm}.
We take into account the additional geometrical information of the total manifold $\widetilde\M$.
Given a bounded measurable function $k_{\widetilde\M} : \R^+ \times \R^+ \to \R^+$ with compact support $[0,1] \times [0,1]$, that we refer as a \emph{kernel}, we set the following \emph{weights},
\begin{equation}\label{eq:symmetric for theorem lifted}
    \widetilde W_{N,N'}((X^i, X^i_a), (X^j, X^j_b)) \coloneqq \frac{h_N^{-m}(h'_{N'})^{-m'} k_{\widetilde\M}\left(\frac{d_\M(X^i, X^j)}{h_N}, \frac{d_{\widetilde\M_{X^j}}(\widetilde{\mathrm{P}}_{X^jX^i} X^i_a,\, X^j_b)}{h'_{N'}}\right)}{\sqrt{e^{-U(X^i)}e^{-U(X^j)}}}.
\end{equation}
For the SEP on the sequence $(G_{N,N'})_{N,N' \ge 1}$ of lifted RNG, we consider the weights given by \eqref{eq:symmetric for theorem lifted}. These weights are in fact \emph{symmetric} in light of \eqref{eq:fibre-isometry} due to the fiber isometry property. The existence of the SEP follows as in \cref{sec:Existence}.

\begin{remark}
    The two bandwidth parameters $h_N$ and $h'_{N'}$ tune the horizontal and vertical parts respectively. In the \emph{horizontal diffusive scaling}, $h_N'$ goes to zero faster than $h_N$ when both $(N,N') \to +\infty$ in order to thermalize in the fiber while having diffusion only along the base manifold.
\end{remark}

\begin{theorem}[Hydrodynamic limit of the lifted SEP]\label{Hydrodynamic limit thm}
    Assume that either $\Ric_M \ge (m-1)\kappa$ for some $\kappa \le 0$ or that $M$ is compact. Let $(\widetilde G_{N,N'} = (\widetilde V_{N,N'}, \widetilde E_{N,N'}))_{N,N' \ge 1}$ be a sequence of lifted $\mathrm{RNG}$ on $\widetilde\M$ in the sense of \cref{def:Lifted random neighbourhood graph} with symmetric weights $(\widetilde W_{N,N'})_{N,N'\ge1}$ given by \eqref{eq:symmetric for theorem lifted}. Let $\widetilde\eta_0$ be an initial configuration of the \emph{SEP} on $G_{N,N'}$ with associated initial empirical measure $\widehat{\pi_0}^{N,N'}$ that converges weakly to $\widetilde\rho_0 e^{-\widetilde U} d\Vol_{\widetilde\M}$ in probability; that is,
    \begin{equation*}
        \forall \varphi \in C_0(\widetilde\M), \quad \left\langle \varphi, d\widehat{\pi_0}^{N,N'} \right\rangle \overset{\mathrm{pr.}}{\to} \left\langle \varphi, \widetilde\rho_0 e^{-\widetilde U} \; d\Vol_{\widetilde\M}\right\rangle \quad \text{as }(N,N')\to +\infty,
    \end{equation*}
    where $\widetilde U \coloneqq U \circ \pi$ is the lift of $U$ and
    \begin{equation*}
         \quad \widehat{\pi_0}^{N,N'} \coloneqq \frac{1}{N}\frac{1}{N'} \sum_{(X^i, X^i_a) \in V_{N,N'}} \eta_0(X^i, X^i_a) \delta_{X^i,X^i_a}.
    \end{equation*}
    Suppose that the bandwidth parameters $(h_N, h'_{N'})_{N,N'\ge1}$ are chosen in such a way that both $h_N \to 0$, $h'_{N'} \to 0$ with $h'_{N'}/h_{N} \to 0$ and $NN'h_N^{m+2}(h'_{N'})^{m'}/\log (NN') \to +\infty$ as $(N, N') \to +\infty$. Then for almost all realizations of the sequence of graphs, the trajectory of the empirical measure $t\mapsto \widehat{\pi_t}^{N,N'}$ associated to the configuration $\widetilde \eta_t^{N,N'}$ that evolves according to the rescaled $\mathrm{SEP}$ with generator $h_N^{-2}L_{\widetilde G_{N,N'}}^{\SEP}$ converges in the Skohorod topology to a deterministic trajectory $t\mapsto \widetilde\rho_t e^{-\widetilde{U}}d\Vol_{\widetilde\M}$ whose density is the weak solution of
    \begin{equation*}\label{eq:horizontal heat drift}
        \partial_t \widetilde\rho_t = C_{k_{\widetilde\M}}\left(\Delta^\H - \widetilde\nabla \widetilde U \cdot \widetilde \nabla\right)\widetilde\rho_t,
    \end{equation*}
    with initial condition $\widetilde\rho_0$ and for some constant $C_{k_{\widetilde\M}}$ that depends only on moments of the kernel $k_{\widetilde\M}$ in \eqref{eq:symmetric for theorem lifted}, provided there exists a unique weak solution. 
\end{theorem}
\begin{remark}
    Note that when the initial profile $\widetilde\rho_0$ on the total manifold $\widetilde\M$ is $\pi$-related to some initial profile $\rho_0$ on the base manifold $\M$ ($\widetilde\rho_0 = \rho_0 \circ \pi$), then the evolution of the latter is given by
    \begin{equation*}
        \partial_t \rho_t = C_{k,\H} \left(\Delta_\M - \nabla U \cdot \nabla\right)\rho_t.
    \end{equation*}
This follows by the $\pi$-relation between the horizontal Laplacian and the Laplacian of the underlying base manifold; for all smooth functions $\phi \in C^\infty(\M)$, it holds
\begin{equation*}
    \Delta^\H (\phi\circ\pi) = \left(\Delta_\M \phi\right) \circ \pi.
\end{equation*}
This identity can be found, for instance, in the monographs of Hsu \cite[Proposition 3.1.2]{hsu_stochastic_2002} for the orthonormal frame bundle and of Baudoin, Demni and Wang \cite[Proposition 4.2.7]{baudoin_stochastic_2023} for submersions with totally geodesic fibers. For a probabilistic proof, we refer to \cite[Proposition 3.13]{junne_invariance_2026}.
\end{remark}
Again, the hydrodynamic limit is a consequence of duality between the SEP and RW. 
Gao obtained the almost-sure pointwise consistency of (normalized) graphs Laplacians for neighborhood graphs lying on general fibre bundles \cite[Theorems 2 \& 4]{gao_diffusion_2021} and obtained rates for the unit tangent bundle $UT\M \to \M$ \cite[Theorem 9]{gao_diffusion_2021}. The weights are of the form
\begin{equation}\label{eq:kernel lifted symmetric weights}
    \widetilde W_{N,N';\alpha}((X^i, X^i_a), (X^j, X^j_b)) \coloneqq \frac{h_N^{-m}(h'_{N'})^{-m'} k_{\widetilde\M}\left(\frac{d_\M(X^i, X^j)}{h_N}, \frac{d_{\widetilde\M_{X^j}}(\widetilde{\mathrm{P}}_{X^jX^i} X^i_a, X^j_b)}{h'_{N'}}\right)}{\left(\overline{k_{\widetilde\M}}(X^i, X^i_a) \overline{k_{\widetilde\M}}(X^j, X^j_b)\right)^\alpha}
\end{equation}
with
\begin{equation*}
    \overline{k_{\widetilde\M}}(X^i, X^i_a) \coloneqq \frac{1}{N}\frac{1}{N'}\sum_{(X^j, X^j_b) \in (\Lambda^N, (\Lambda')^{N'})} h_N^{-m}(h'_{N'})^{-m'} k_{\widetilde\M}\left(\frac{d_\M(X^i, X^j)}{h_N}, \frac{d_{\widetilde\M_{X^j}}(\widetilde{\mathrm{P}}_{X^jX^i} X^i_a, X^j_b)}{h'_{N'}}\right).
\end{equation*}
\begin{definition}[Lifted Weighted horizontal Laplacian]\label{def:Lifted weighted horizontal Laplacian}
    Given a smooth  function $\phi \in C^\infty(\widetilde\M)$, the \emph{lifted $\alpha$-weighted horizontal Laplacian} with respect to the Gibbs reference measure $d\mu = e^{-U} d\Vol_\M$ on $\M$ is the reversible operator with respect to $e^{-\alpha\widetilde U} \:d\Vol_{\widetilde\M}$; that is, the operator satisfying
    \begin{equation*}
        \forall \varphi \in C_c^\infty(\widetilde\M), \quad \int_{\widetilde\M} \varphi \Delta_{\alpha}^\H \phi \:e^{-\alpha\widetilde U}d\Vol_{\widetilde\M} = -\int_{\widetilde \M} \left(\widetilde \nabla \varphi\right)^\H \cdot \left(\widetilde \nabla \phi\right)^\H \:e^{-\alpha\widetilde U}d\Vol_{\widetilde\M},
    \end{equation*}
    which is given by
    \begin{equation*}\label{eq:Weighted horizontal Laplacian Delta_alpha}
        \Delta_\alpha^\H \coloneqq e^{\alpha\widetilde U} \widetilde\nabla \cdot \left(e^{-\alpha\widetilde U} \widetilde\nabla^\H \right) = \Delta^\H - \alpha\widetilde \nabla \widetilde U \cdot \widetilde \nabla.
    \end{equation*}
\end{definition}
Cast in our setting, following the line of proof of \cref{thm:almost sure convergence empirical URW to Delta 2(1-alpha) general}, we obtain the desired convergence.
\begin{theorem}\label{thm:almost sure convergence empirical lifted URW to horizontal Delta 2(1-alpha) general}
    Let $\varphi \in C_c^\infty(\widetilde\M)$ and let $(\widetilde G_{N, N'})_{N,N' \ge 1}$ be a sequence of lifted $\mathrm{RNG}$ in the sense of \cref{def:Lifted random neighbourhood graph} with symmetric weights $(\widetilde W_{N,N';\alpha})_{N,N' \ge 1}$ given by \eqref{eq:kernel lifted symmetric weights} and associated RW with rescaled generator $h_N^{-2}L_{\widetilde G_{N,N'}}^{\RW}$. There are constants $\widetilde C_{\alpha;1,2,3} > 0$ that only depend on $\alpha, k_{\widetilde\M}, \varphi$, $\mu$, $m$ and on $m'$, and a constant $C_{k_{\widetilde\M};\alpha} > 0$ that depends on the moments of $k_{\widetilde\M}$ and $\alpha$ such that, for all $\varepsilon > 0$ and $h_N, h'_{N'}$ small enough,
    \begin{align*}
        \P\Bigg[\sup_{(X^i, X^i_a) \in V_{N,N'}}\Bigg| \Bigg(h_N^{-2} L_{\widetilde G_{N,N'}}^{\RW}\varphi - C_{k_{\widetilde\M};\alpha}&\left(e^{(2\alpha-1)\widetilde U}\Delta^\H_{2(1-\alpha)}\varphi\right)\!\!\Bigg)(X^i, X^i_a)
        + O\Big(h_N + h_N^{-2}(h'_{N'})^{2}\Big)\Bigg| > \varepsilon\Bigg]
            \\ &\le \widetilde C_{\alpha;1}NN' \exp\left(-\widetilde C_{\alpha;2} Nh_N^{m}(h'_{N'})^{m'} \tfrac{h^4_N}{(h_N + h'_{N'})^2}\varepsilon^2\right).
    \end{align*}
    Then if both $h_N \to 0$, $h'_{N'} \to 0$ with $h'_{N'}/h_{N} \to 0$ and $NN' h_N^{m+2}(h'_{N'})^{m'}/\log (NN') \to +\infty$ as $(N, N') \to +\infty$, it holds almost surely and uniformly in the vertices $(X^i, X^i_a)$'s,
    \begin{equation*}
        \lim_{(N,N') \to +\infty} h_N^{-2} L_{N,N';\,s}^{\URW}\varphi(X^i, X^i_a) = C_{k_{\widetilde\M};\alpha} e^{(2\alpha-1)\widetilde U(X^i, X^i_a)}\Delta^\H_{2(1-\alpha)}\varphi(X^i, X^i_a).
    \end{equation*}
\end{theorem}

\appendix
\section{Computational lemmas}\label{sec:Computational lemmas}
In this appendix we gather several computational lemmas characterizing the limiting operator associated to the rescaled RW. The limits of the integral operators can be found in Hein, Audibert, von Luxurg \cite[Proposition 22 \& Theorem 25]{hein_graph_2007} and Gao \cite[Lemmas 13,14,15 \& Theorem 4] {gao_diffusion_2021} among others.
\begin{lemma}\label{thm:1st 2nd 3rd moments k}
    Denote by $S_{m-1} \coloneqq 2\pi^{m/2}/\Gamma(m/2)$ the surface of the $(m-1)$-dimensional unit sphere in $\R^m$ and by $B_m(0,1)$ its unit ball. The first three moments of $k$ are given by
    \begin{align*}
        M^k_0 &\coloneqq \int_{B_m(0,\,1)} k\left(\abs{v}\right) \: dv = S_{m-1} \int_0^1 k(r) r^{m-1} \: dr \coloneqq C_{k,0}, \\
        M^k_1 &\coloneqq \int_{B_m(0,\,1)} k\left(\abs{v}\right) v^i \: dv = 0, \\
        M^k_2 &\coloneqq \int_{B_m(0,\,1)} k\left(\abs{v}\right) v^iv^j \: dv = \left(\frac{S_{m-1}}{m} \int_0^1 k(r) r^{m+1} \: dr\right) \delta^{ij} \coloneqq C_{k,2} \delta^{ij}, \\
        M^k_3 &\coloneqq \int_{B_m(0,\,1)} k\left(\abs{v}\right) v^iv^jv^l \: dv = 0.
    \end{align*}
\end{lemma}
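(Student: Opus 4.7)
The plan is to exploit spherical coordinates $v = r\omega$ with $r \in [0,1]$ and $\omega \in S^{m-1}$, together with coordinate-reflection symmetries of the integrands. With $dv = r^{m-1}\, dr\, d\sigma(\omega)$, where $\sigma$ denotes the surface measure on $S^{m-1}$ (of total mass $S_{m-1}$), the first moment $M^k_0$ reduces immediately to the claimed one-dimensional radial integral, since $k(|v|) = k(r)$ is constant along each sphere and the angular integral contributes exactly $S_{m-1}$.

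For $M^k_1$ and $M^k_3$, the strategy is to invoke the reflection $T_i : v^i \mapsto -v^i$ (fixing the other coordinates), which is a measure-preserving involution of $B_m(0;1)$ leaving $k(|v|)$ invariant because the kernel depends only on $|v|$. Since $v^i$ is odd under $T_i$, the integral $M^k_1$ vanishes coordinate-by-coordinate. For $M^k_3$ I would run a small case analysis on the triple of indices in $v^i v^j v^l$: whether all three agree, exactly two agree, or all three differ, at least one coordinate index necessarily appears an odd number of times, so applying $T$ along that coordinate flips the sign of the integrand and forces $M^k_3 = 0$.

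For $M^k_2$ the off-diagonal case $i \neq j$ vanishes again by the $T_i$-reflection, giving the factor $\delta^{ij}$. For the diagonal case, I would use permutation symmetry of the coordinates (equivalently, $O(m)$-invariance of $k(|v|)\,dv$) to conclude that $\int_{B_m(0;1)} k(|v|) (v^i)^2 \, dv$ is independent of $i$. Summing over $i$ collapses the integrand into $k(|v|) |v|^2$, which in polar coordinates yields $S_{m-1} \int_0^1 k(r) r^{m+1}\, dr$; dividing by $m$ produces the advertised constant $C_{k,2}$.

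No step presents a real obstacle: the argument is essentially a Fubini-style separation between the radial and angular variables, together with parity. The only mildly non-routine point is organising the index case-analysis for $M^k_3$ cleanly, which I would do by invoking the pigeonhole observation above rather than by writing out the three cases in full.
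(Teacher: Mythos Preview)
Your proposal is correct and follows essentially the same approach as the paper: symmetry arguments (reflections) to kill the odd moments and the off-diagonal part of $M^k_2$, then the trace trick (summing over $i$ and passing to polar coordinates) to identify $C_{k,2}$. The only cosmetic difference is that the paper dispatches both odd moments at once via the full reflection $v\mapsto -v$, whereas you use single-coordinate reflections and a short pigeonhole case-split for $M^k_3$; both are equally valid.
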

\begin{proof}
    Regarding the odds moments, the reflection $v \mapsto -v$ changes the sign of the integrated quantity; its integral must be zero. For the second moment, reflecting a single coordinate, we deduce that $\int k(\abs{v})v^iv^j \: dv \propto \delta^{ij}$. The proportionality constant $C_{k,2}$ is then obtained noting that
    \begin{equation*}
        \Tr\left(C_{k,2}(\delta^{ij})_{ij}\right) = C_{k,2}m = \sum_{i=1}^m \int_{B_m(0,\, 1)} k\left(\abs{v}\right) \left(v^i\right)^2 \: dv = S_{m-1}\int_0^1 k(r)r^{m+1} \: dr.
    \end{equation*}
\end{proof}
\begin{lemma}\label{thm:lower bound upper bound E overline k}
    Let $\varphi \in C_c^\infty(\M)$ be a test function. Then uniformly in $x \in \vh$,
    \begin{equation}\label{eq:expectation overline k}
        \int_\M h^{-m} k\left(\frac{d_\M(x, y)}{h}\right) \: d\mu(y) = C_{k,0} e^{-U(x)} + O(h^2).
    \end{equation}
    In particular, there are two constants $C, C' > 0$ such that
    \begin{equation*}
        \forall x \in (\Supp \varphi)_{h_1},\quad C' \le \int_\M h^{-m}k\left(\frac{d_\M(x, y)}{h}\right) \: d\mu(y) \le C.
    \end{equation*}
\end{lemma}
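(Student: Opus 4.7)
The plan is to mirror the normal-coordinate argument used in the proof of \cref{thm:convergence integral to Delta}. Since $\Supp\, k = [0,1]$, the integrand vanishes outside the geodesic ball $B_\M(x;h)$, and for $h \le h_1$ sufficiently small (shrinking $h_1$ if necessary), the exponential map $\exp_x : B_{T_x\M}(0;h) \to B_\M(x;h)$ is a diffeomorphism uniformly in $x \in \vh$, since the injectivity radius is bounded below on this compact set. Writing $d\mu = e^{-U}d\Vol_\M$ and changing variables $y = \exp_x(hv)$ via an orthonormal frame of $T_x\M$, the left-hand side of \eqref{eq:expectation overline k} becomes
\begin{equation*}
    \int_{B_m(0;\,1)} k(\abs{v}) \, e^{-U(\exp_x(hv))} \sqrt{\det g(\exp_x(hv))}\: dv.
\end{equation*}

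I would then Taylor-expand both factors about $h = 0$, obtaining
\begin{equation*}
    e^{-U(\exp_x(hv))} = e^{-U(x)}\bigl(1 - h\, d_xU(v) + O(h^2)\bigr), \qquad \sqrt{\det g(\exp_x(hv))} = 1 + O(h^2),
\end{equation*}
with remainders uniform in $x \in \vh$ by smoothness of $U$ and $g$ combined with compactness of $\vh$. The zeroth-order contribution is $e^{-U(x)} C_{k,0}$ by \cref{thm:1st 2nd 3rd moments k}; the $O(h)$ term vanishes because $\int k(\abs{v})\,v^i \,dv = 0$ by the same lemma; and the remaining terms collapse to $O(h^2)$, proving \eqref{eq:expectation overline k}.

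The pointwise two-sided bound then follows from smoothness of $U$ and compactness of $\vh$, which yield $0 < \inf_{\vh} e^{-U} \le \sup_{\vh} e^{-U} < \infty$, so that $C_{k,0} e^{-U(x)}$ lies in a fixed compact sub-interval of $(0, \infty)$ for all $x \in \vh$. Shrinking $h_1$ if needed so as to absorb the $O(h^2)$ error into this interval yields the constants $C'$ and $C$. The only technical care is the uniformity of the Taylor remainders, which reduces to uniform lower bounds on the injectivity radius and uniform upper bounds on the Hessian of $U$ and on the curvature tensor over $\vh$; these all hold by compactness and smoothness, and this uniformity constitutes the (minor) main obstacle of the argument.
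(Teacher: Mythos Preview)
Your proposal is correct and follows essentially the same route as the paper: change to normal coordinates about $x$, Taylor-expand $e^{-U(\exp_x(hv))}$ and $\sqrt{\det g(\exp_x(hv))}$, kill the $O(h)$ term via the vanishing first moment of $k$ from \cref{thm:1st 2nd 3rd moments k}, and conclude by compactness of $\vh$. The paper's proof is terser and does not spell out the injectivity-radius and uniform-remainder considerations you mention, but the argument is the same.
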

\begin{proof}
    Going to normal coordinates about $x$ yields
    \begin{align*}
        \int_\M h^{-m} k\left(\frac{d_\M(x, y)}{h}\right) \: d\mu(y) &= e^{-U(x)}\int_{B_{T_x\M}(0,1)} k\left(\abs{v}\right) e^{-\left(U(\exp_x(hv)) - U(x)\right)} \sqrt{\det g(\exp_x(hv))} \:dv \\
            &= e^{-U(x)}\int_{B_{m}(0,1)} k\left(\abs{v}\right) \left(1 - E_iUv^ih + O(h^2)\right) \:dv \\
            &= C_{k,0} e^{-U(x)} + O(h^2),
    \end{align*}
    where we used in the penultimate line that the odd moment vanishes by \cref{thm:1st 2nd 3rd moments k}.
\end{proof}
\begin{lemma}\label{thm:convergence integral to weighted Laplacian}
    Let $\varphi \in C_c^\infty(\M)$ be a test function. Then uniformly in $x \in \M$,
        \begin{equation}\label{eq:limit laplacian URW 2(alpha-1)}
            h^{-2} \int_{\M} \frac{h^{-m} k\left(\frac{d_\M(x, y)}{h}\right) \left(\varphi(y) - \varphi(x)\right)}{\left(\E\Big[\overline{k}(x)\Big]\right)^\alpha\left(\E\Big[\overline{k}(y)\Big]\right)^\alpha} \: d\mu(y) = \frac{C_{k,2}}{2\left(C_{k,0}\right)^{2\alpha}}e^{(2\alpha-1)U(x)}\Delta_{2(1-\alpha)}\varphi(x) + O(h^2).
        \end{equation}
\end{lemma}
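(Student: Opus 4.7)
The plan is to reduce the weighted integral to the bare computation performed in the proof of \cref{thm:convergence integral to Delta} by first pulling the $\E[\overline k]$-prefactors out using \cref{thm:lower bound upper bound E overline k}, and then absorbing the residual $y$-dependent exponential into the normal-coordinate Taylor expansion around $x$.

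First I would apply \cref{thm:lower bound upper bound E overline k} to obtain, uniformly for $x,y$ in the relevant thickening $\vh$ (where the integrand is supported for $h\le h_1$),
\begin{equation*}
    \frac{1}{\left(\E\big[\overline{k}(x)\big]\right)^\alpha\left(\E\big[\overline{k}(y)\big]\right)^\alpha}
    = \frac{e^{\alpha U(x)}\,e^{\alpha U(y)}}{C_{k,0}^{2\alpha}} + O(h^2).
\end{equation*}
When multiplied by $h^{-2-m}k(d_\M(x,y)/h)(\varphi(y)-\varphi(x))$ and integrated against $d\mu(y)$, the $O(h^2)$ remainder is paired with an $O(1)$ integral (the odd first moment vanishes by \cref{thm:1st 2nd 3rd moments k}, exactly as in \cref{thm:convergence integral to Delta}, the only change being the harmless extra factor $e^{-U(y)}$), hence contributes $O(h^2)$ to the final error. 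What remains to analyse is
\begin{equation*}
    I(x) := \frac{e^{\alpha U(x)}}{C_{k,0}^{2\alpha}}\,h^{-2}\int_\M h^{-m}k\!\left(\frac{d_\M(x,y)}{h}\right)\big(\varphi(y)-\varphi(x)\big)\,e^{(\alpha-1)U(y)}\,d\Vol_\M(y).
\end{equation*}

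Next I would pass to normal coordinates $y=\exp_x(hv)$ around $x$, factor out $e^{(\alpha-1)U(x)}$, and Taylor expand the three $h$-dependent pieces:
\begin{align*}
    e^{(\alpha-1)(U(\exp_x(hv))-U(x))} &= 1 + (\alpha-1)(E_iU)\,v^i h + O(h^2),\\
    \varphi(\exp_x(hv))-\varphi(x) &= (E_i\varphi)\,v^i h + \tfrac{1}{2}\Hess_x\varphi(v,v)\,h^2 + O(h^3),\\
    \sqrt{\det g(\exp_x(hv))} &= 1 + O(h^2).
\end{align*}
Multiplying out and integrating against $k(|v|)$, all odd-order-in-$v$ contributions vanish by \cref{thm:1st 2nd 3rd moments k}. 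Exactly two $h^2$ terms survive: the pure Hessian $\tfrac{1}{2}\Hess_x\varphi(v,v)$, which integrates to $\tfrac{C_{k,2}}{2}\Delta\varphi(x)$ as in \cref{thm:convergence integral to Delta}, and the cross term $(\alpha-1)(E_iU)(E_j\varphi)v^iv^j$ coming from the product of the two order-$h$ contributions, which integrates to $(\alpha-1)\,C_{k,2}\,\nabla U\cdot\nabla\varphi(x)$.

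Collecting everything with the overall prefactor $e^{\alpha U(x)}e^{(\alpha-1)U(x)}/C_{k,0}^{2\alpha}=e^{(2\alpha-1)U(x)}/C_{k,0}^{2\alpha}$ gives
\begin{equation*}
    I(x) = \frac{C_{k,2}}{2\,C_{k,0}^{2\alpha}}\,e^{(2\alpha-1)U(x)}\Big(\Delta\varphi - 2(1-\alpha)\nabla U\cdot\nabla\varphi\Big)(x) + O(h^2),
\end{equation*}
which by \cref{def:Weighted Laplacian} equals $\tfrac{C_{k,2}}{2\,C_{k,0}^{2\alpha}}e^{(2\alpha-1)U(x)}\Delta_{2(1-\alpha)}\varphi(x)+O(h^2)$, as required. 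The only delicate step is the bookkeeping of the cross-product between the two order-$h$ Taylor contributions: it is precisely this term that produces the drift $\nabla U\cdot\nabla\varphi$ with prefactor $(\alpha-1)C_{k,2}$, so that $\Delta - 2(1-\alpha)\nabla U\cdot\nabla$ reassembles cleanly into the weighted Laplacian $\Delta_{2(1-\alpha)}$ with no stray contribution from $\sqrt{\det g}$ or from the residual $e^{-U(y)}$ factor.
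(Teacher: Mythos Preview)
Your proof is correct and follows essentially the same route as the paper: replace the denominators via \cref{thm:lower bound upper bound E overline k}, pass to normal coordinates about $x$, Taylor expand the three factors, and read off the weighted Laplacian from the second moment using \cref{thm:1st 2nd 3rd moments k}. One small point of care worth making explicit (the paper is equally informal here): the $O(h^2)$ remainder from expanding $\big(\E[\overline k(x)]\E[\overline k(y)]\big)^{-\alpha}$ depends on $y$ as well as on $x$, so it cannot literally be ``paired'' with a fixed $O(1)$ integral; the reason it still contributes only $O(h^2)$ is that $\E[\overline k(\cdot)]$ is itself a smooth function with an expansion $C_{k,0}e^{-U(\cdot)}+h^2 g(\cdot)+O(h^4)$, so the $y$-dependent remainder can be Taylor expanded around $x$ and absorbed into the same moment computation.
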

\begin{proof}
    We expand the denominator; $\Lambda^N$ is a ($\mathrm{PPP}$) of intensity $N d\mu$, hence
    \begin{equation}\label{eq:expectation def overline k}
        \E\Big[\overline{k}(x)\Big] = \E\left[\frac{1}{N} \sum_{X^l \in \Lambda^N} h^{-m}k\left(\frac{d_\M(x, X^l)}{h}\right)\right] = \int_\M h^{-m} k\left(\frac{d(x,z)}{h}\right) \: d\mu(z).
    \end{equation}
    Substituting the expectation \eqref{eq:expectation def overline k} by \eqref{eq:expectation overline k} in the denominator to get
    \begin{align*}
    h^{-2} \int_{\M} &\frac{h^{-m} k\left(\frac{d_\M(x, y)}{h}\right) \left(\varphi(y) - \varphi(x)\right)}{\left(\E\Big[\overline{k}(x)\Big]\right)^\alpha\left(\E\Big[\overline{k}(y)\Big]\right)^\alpha} \: d\mu(y) \\
            = &h^{-2}\frac{e^{(2\alpha-1)U(x)}}{\left(C_{k,0}\right)^{2\alpha}}\int_{B_{T_x\M}(0,1)} k\left(\abs{v}\right) \left(\varphi\left(\exp_x(hv)\right) - \varphi(x)\right) e^{-(1-\alpha)\left(U(\exp_x(hv)) - U(x)\right)} \\
            &\quad \times \sqrt{\det g(\exp_x(hv))}\left(1 + O(h^2)\right) \:dv
    \end{align*}
    Expanding the right-hand side yields
    \begin{align*}
            &h^{-2}\frac{e^{(2\alpha-1)U(x)}}{\left(C_{k,0}\right)^{2\alpha}}\int_{B_{T_x\M}(0,1)} k\left(\abs{v}\right) \Bigg\{\left(d_x\varphi(v)h + \frac{1}{2}\Hess_x \varphi(v,v)h^2 + \frac{1}{6}\nabla\Hess_x\varphi(v,v,v)h^3\right) \\
            &\quad \Big(1 + (\alpha-1)d_xU(v)h + O(h^2)\Big)\left(1 - \frac{1}{6}\Ric_x(v,v)h^2\right)\left(1 + O(h^2)\right)\Bigg\} + O(h^4)\: dv \\
            & = h^{-2}\frac{e^{(2\alpha-1)U(x)}}{\left(C_{k,0}\right)^{2\alpha}}\int_{B_{m}(0,1)} k\left(\abs{v}\right) \Bigg\{E_i\varphi v^i h\left(1 + O(h^2)\right) + \left(\frac{1}{2}E_iE_j\varphi + (\alpha-1) E_iU E_j\varphi\right)v^iv^jh^2 \\ 
            &\quad + \left(\frac{1}{6}\left(E_iE_jE_l\varphi - \Ric_{ij}E_l\varphi\right) + \frac{\alpha-1}{2}E_iUE_jE_l\varphi\right)v^iv^jv^lh^3 \Bigg\} + O(h^4) \: dv.
    \end{align*}
    The odds moments vanish by \cref{thm:1st 2nd 3rd moments k} while the second moment recovers the weighted Laplacian;
    \begin{align*}
        \left(\frac{1}{2}E_iE_j\varphi + (\alpha-1) E_iU E_j\varphi\right)\int_{B_m(0;\,1)} k\left(\abs{v}\right) v^iv^j \: dv
            &= \frac{C_{k,2}}{2}\left(\Delta - 2(1-\alpha)\nabla U \cdot \nabla\right)\varphi \\
            &\coloneqq \frac{C_{k,2}}{2}\Delta_{2(1-\alpha)}\varphi.
    \end{align*}
\end{proof}
\begin{lemma}\label{thm:1st 2nd 3rd moments k_h}
    The first moments of $k_{\widetilde\M}$ are given by
    \begin{align*}
        &M^{k_{\widetilde\M}}_0 \coloneqq \int_{B_m(0;\,1)}\int_{B_{m'}(0; \,1)} k_{\widetilde\M}\left(\abs{v},  \abs{w}\right) \: dw dv \\ 
        &\hphantom{M^{k_{\widetilde\M}}_0 \:} = S_{m-1}S_{m' - 1} \int_0^1\int_0^1 k_{\widetilde\M}(r,r') r^{m-1}(r')^{m' - 1} \: dr'dr \coloneqq C_{k_{\widetilde\M}, 0}, \\
        &M^{k_{\widetilde\M}}_{\cdot,1} \coloneqq \int_{B_{m'}(0,1)} k_{\widetilde\M}\left(\frac{d_\M(x,y)}{h},  \abs{w}\right) w^j \: dw = 0, \\
        &M^{k_{\widetilde\M}}_{1,0} \coloneqq \int_{B_m(0,1)}\int_{B_{m'}(0,1)} k_{\widetilde\M}\left(\abs{v},  \abs{w}\right) v^i \: dw dv = 0, \\
        &M^{k_{\widetilde\M}}_{2,0} \coloneqq \int_{B_m(0,1)}\int_{B_{m'}(0,1)} k_{\widetilde\M}\left(\abs{v},  \abs{w}\right) v^iv^j \: dw dv, \\
        &\hphantom{M^{k_{\widetilde\M}}_{2} \:}= \left(\frac{S_{m-1}}{m}S_{m'-1} \int_0^1\int_0^1 k_{\widetilde\M}(r,r') r^{m+1}(r')^{m'-1} \: dr'dr\right) \delta^{ij} \coloneqq C_{k_{\widetilde\M},2}\delta^{ij}.
    \end{align*}
\end{lemma}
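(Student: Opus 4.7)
The plan is to follow verbatim the strategy of \cref{thm:1st 2nd 3rd moments k}, exploiting the product structure of the domain $B_m(0;1)\times B_{m'}(0;1)$ and the fact that $k_{\widetilde\M}(|v|,|w|)$ depends on $v$ and $w$ only through their norms. All four claims reduce to reflection symmetries and a change to spherical coordinates (in each factor separately via Fubini).

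For $M^{k_{\widetilde\M}}_0$, I would apply Fubini and pass to spherical coordinates in each factor: $dv = r^{m-1}\,dr\,d\sigma_{m-1}$ and $dw=(r')^{m'-1}\,dr'\,d\sigma_{m'-1}$. Since the integrand depends only on $r=|v|$ and $r'=|w|$, the two angular integrations produce the factors $S_{m-1}$ and $S_{m'-1}$, leaving exactly the stated double radial integral. For the two vanishing first moments $M^{k_{\widetilde\M}}_{\cdot,1}$ and $M^{k_{\widetilde\M}}_{1,0}$, I would use the reflection $w\mapsto -w$ (respectively $v\mapsto -v$): the kernel is invariant because it depends only on $|w|$ (respectively $|v|$), while the factor $w^j$ (respectively $v^i$) flips sign, so the integrand is odd on a symmetric domain and the integral vanishes. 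In the first case the variable $v$ (encoded as $d_\M(x,y)/h$) plays no role in the symmetry argument and can be treated as a fixed parameter.

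The second moment $M^{k_{\widetilde\M}}_{2,0}$ is handled in two steps. First, to see that the matrix is diagonal, fix $i\ne j$ and reflect the $i$-th coordinate of $v$ alone (leaving $|v|$, $|w|$, and the domain invariant); the factor $v^iv^j$ flips sign, so the off-diagonal entries vanish. By the rotational symmetry of the problem under $O(m)$ acting on the $v$-factor, the diagonal entries are all equal, hence the whole tensor is proportional to $\delta^{ij}$. The proportionality constant is then pinned down by taking the trace,
\begin{equation*}
m\,C_{k_{\widetilde\M},2}=\sum_{i=1}^m\int_{B_m(0;1)}\!\!\int_{B_{m'}(0;1)} k_{\widetilde\M}(|v|,|w|)(v^i)^2\,dw\,dv=\int_{B_m(0;1)}\!\!\int_{B_{m'}(0;1)}k_{\widetilde\M}(|v|,|w|)|v|^2\,dw\,dv,
\end{equation*}
and then passing to spherical coordinates as above yields $m\,C_{k_{\widetilde\M},2}=S_{m-1}S_{m'-1}\int_0^1\!\int_0^1 k_{\widetilde\M}(r,r')\,r^{m+1}(r')^{m'-1}\,dr'\,dr$, from which the stated formula follows after dividing by $m$.

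There is no real obstacle here: the lemma is a routine product-space analogue of \cref{thm:1st 2nd 3rd moments k}, and the only things to be careful about are (i) writing each integral as a genuine product via Fubini before exploiting the angular symmetries, and (ii) using reflection in a \emph{single} coordinate (not the full antipodal map) to kill off-diagonal entries in the second-moment tensor.
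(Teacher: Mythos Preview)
Your proposal is correct and follows exactly the paper's approach: the paper's own proof simply reads ``The proof is similar to the one of \cref{thm:1st 2nd 3rd moments k},'' and what you have written is precisely that argument spelled out in the product setting. There is nothing to add.
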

\begin{proof}
    The proof is similar to the one of \cref{thm:1st 2nd 3rd moments k}.
\end{proof}
\begin{lemma}\label{thm:denom expectation lifted}
    Let $\varphi \in C_c^\infty(\widetilde\M)$ be a test function. Then uniformly in $(x,u)$ on the $(h_1,h_1')$-thickening of the support of $\varphi$ given by
    \begin{equation*}
        (\Supp\varphi)_{h_1,h'_1} \coloneqq \left\{(z,p) \in \widetilde\M; \: d_\M(z, \pi \circ \Supp\varphi) \le h_1, d_{\widetilde\M_z}\left(p, \Supp\varphi_z\right) \le h_1'\right\},
    \end{equation*}
    it holds
    \begin{equation*}
        \int_{\widetilde\M} h^{-m}(h')^{-m'} k_{\widetilde\M}\left(\frac{d_\M(x,y)}{h}, \frac{d_{\widetilde\M_y}(\widetilde P_{yx}u,q)}{h'}\right) e^{-\widetilde U(y,q)}\:d\Vol_{\widetilde\M}(y,q) = C_{k_{\widetilde\M},0}e^{-\widetilde U(x,u)} + O(h^2) + O((h')^2).
    \end{equation*}
\end{lemma}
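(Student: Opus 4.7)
The plan is to mimic the proof of \cref{thm:lower bound upper bound E overline k}, doing the normal-coordinate expansion in two stages: first along the fibre, then along the base. Three structural facts make this clean: in our principal-bundle setup the metric splits as $\widetilde g = \pi^*g \oplus (-B)\omega$ with totally geodesic fibres, so the volume element factorises locally as $d\Vol_{\widetilde\M}(y,q) = d\Vol_\M(y)\,d\Vol_{\widetilde\M_y}(q)$; the fibres all carry the same bi-invariant metric coming from $-B$, so Jacobian expansions in the fibre are uniform in the basepoint; and the lifted potential $\widetilde U = U\circ\pi$ is fibre-constant, so $e^{-\widetilde U(y,q)} = e^{-U(y)}$ and in particular $\widetilde U(x,u) = U(x)$.

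First I would apply Fubini to rewrite the integral as
\begin{equation*}
    I(x,u) := \int_\M e^{-U(y)} h^{-m}\left[\int_{\widetilde\M_y}(h')^{-m'} k_{\widetilde\M}\!\left(\frac{d_\M(x,y)}{h},\frac{d_{\widetilde\M_y}(\widetilde P_{yx}u,q)}{h'}\right) d\Vol_{\widetilde\M_y}(q)\right] d\Vol_\M(y).
\end{equation*}
For $h'$ small, the compact support of $k_{\widetilde\M}$ lets me change variables in the inner integral by the fibre exponential $q = \exp^{\widetilde\M_y}_{\widetilde P_{yx}u}(h'w)$ with $|w|\le 1$. Normal coordinates give $d\Vol_{\widetilde\M_y}(q) = (h')^{m'}(1 + O((h')^2))\,dw$, and the inner integral collapses to
\begin{equation*}
    \int_{B_{m'}(0;1)} k_{\widetilde\M}\!\left(\frac{d_\M(x,y)}{h}, |w|\right)\left(1 + O((h')^2)\right) dw,
\end{equation*}
uniformly in $y$.

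Next I would change variables in the outer integral by the base exponential $y = \exp_x(hv)$ with $|v|\le 1$, which, exactly as in the proof of \cref{thm:lower bound upper bound E overline k}, produces $d\Vol_\M(y) = h^m(1 + O(h^2))\,dv$ and $e^{-U(\exp_x(hv))} = e^{-U(x)}(1 - d_xU(v)h + O(h^2))$. Assembling the two changes of variables yields
\begin{equation*}
    I(x,u) = e^{-U(x)}\int_{B_m(0;1)}\int_{B_{m'}(0;1)} k_{\widetilde\M}(|v|,|w|)\left(1 - d_xU(v)h + O(h^2) + O((h')^2)\right) dw\,dv.
\end{equation*}
By \cref{thm:1st 2nd 3rd moments k_h} the linear term in $v$ integrates to zero ($M^{k_{\widetilde\M}}_{1,0} = 0$), and the zeroth moment contributes $C_{k_{\widetilde\M},0}$. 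Since $\widetilde U(x,u) = U(x)$, this gives the claimed identity.

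The main obstacle I anticipate is uniformity of all the error terms in $(x,u)$ over the $(h_1,h_1')$-thickening: the Jacobian expansions in both the base and fibre directions, the Taylor remainder for $U$, and the injectivity radii controlling where the exponential maps are well-defined must be bounded independently of $(x,u)$. This is handled by the compactness of $\Supp\varphi$ together with smoothness of $U$ and the homogeneity of the fibre geometry arising from the principal-bundle/bi-invariant structure. Once uniformity is in place, the argument is a routine transcription of the base-manifold computation of \cref{thm:lower bound upper bound E overline k} combined with the fibre-direction Fubini step.
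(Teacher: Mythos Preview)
Your proposal is correct and follows essentially the same approach as the paper: a two-stage normal-coordinate expansion, first in the fibre via $\exp^{\widetilde\M_y}_{\widetilde P_{yx}u}(h'w)$ and then in the base via $\exp_x(hv)$, killing the odd $v$-moment with \cref{thm:1st 2nd 3rd moments k_h} and invoking compactness for uniformity. The paper's proof is slightly less explicit about the Fubini step and the structural facts you highlight (metric splitting, fibre-constant potential), but the argument is the same.
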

\begin{proof}
    For $h$ and $h'$ small enough, the restriction of both exponential maps, namely, $\exp_x : B_{T_x\M}(0, h) \simeq B_m(0, h') \to \M$ on the base manifold and $\exp_y^{\widetilde\M_y} : B_{T_{(y,\widetilde{\mathrm{P}}_{yx}u)}\widetilde\M_y}(0, h) \simeq B_{m'}(0, h') \to \widetilde\M_y$ on the fiber are well-defined diffeomorphisms, so that
    \begin{align*}
        \int_\M\int_{\widetilde M_z} h^{-m}&(h')^{-m'}  k_{\widetilde\M}\left(\frac{d_\M(x, y)}{h}, \frac{d_{\widetilde\M_y}\left(\widetilde{\mathrm{P}}_{yx}u, q\right)}{h'}\right) \: d\Vol_{\widetilde\M_y}(q)d\mu(y) \\
            &= \int_{B_{\M}(0,h)}\int_{B_{T_{(y,\widetilde{\mathrm{P}}_{yx}u)}\widetilde\M_y}(0,1)} h^{-m} k_{\widetilde\M}\left(\frac{d_\M(x,y)}{h}, w\right) \sqrt{\det \widetilde g^\V(\exp_{(y,\widetilde{\mathrm{P}}_{yx}u)}^{\widetilde\M_z}(h'w))} \:dw d\mu(y) \numberthis\label{eq:after normal coordiinates fibre}
    \end{align*}
    by going to normal coordinates about $(y, \widetilde{\mathrm{P}}_{yx}u)$. As for the base manifold, the expansion of the fiber's metric is given by
    \begin{equation*}
        \sqrt{\det \widetilde g^\V\left(\exp_{(y,\widetilde{\mathrm{P}}_{yx}u)}^{\widetilde\M_y}(h'w)\right)} = 1 -\frac{1}{6}\Ric_{\widetilde\M}^\V(h'w, h'w) + O((h')^3) = 1 + O((h')^2)
    \end{equation*}
    with an error uniform in both $(x,u)$ and $(y, \widetilde{\mathrm{P}}_{yx}u)$ by compactness of $(\Supp\varphi)_{h_1,h'_1}$. We further expand on the base manifold by going to normal coordinates about $x$ and \eqref{eq:after normal coordiinates fibre} becomes
    \begin{align*}
        \int_{B_{\M}(0;\,h)}\int_{B_{m'}(0;\,1)} &h^{-m} k_{\widetilde\M}\left(\frac{d_\M(x,y)}{h}, w\right)\left(1 + O((h')^2\right)\:dw d\mu(y)  \\
            &= e^{-U(x)} \int_{B_m(0;\,1)}\int_{B_{m'}(0;\, 1)} k_{\widetilde\M}\left(\abs{v}, \abs{w}\right) \left(1 - E_iU v^i h + O(h^2)\right)\left(1 + O((h')^2\right) dwdv \\
            &= C_{k_{\widetilde\M},0}e^{-\widetilde U(x,u)} + O(h^2) + O((h')^2)
    \end{align*}
    by \cref{thm:1st 2nd 3rd moments k_h}.
\end{proof}
\begin{lemma}
    Let $\varphi \in C_c^\infty(\widetilde\M)$ be a test function. Then uniformly in $(x, u) \in \widetilde\M$,
    \begin{multline*}
        h^{-2} \int_{\widetilde\M}\frac{h^{-m} (h')^{-m'} k_{\widetilde\M} \left(\frac{d_\M(x, y)}{h}, \frac{d_{{\widetilde\M}_{y}}(\widetilde{\mathrm{P}}_{yx} u, q)}{h'}\right) \left(\varphi(y,q) - \varphi(x,u)\right)}{\left(\E\Big[\overline{k_{\widetilde\M}}(x,u)\Big]\right)^\alpha\left(\E\Big[\overline{k_{\widetilde\M}}(y,q)\Big]\right)^\alpha}e^{-\widetilde{U}(y,q)} \: d\Vol_{\widetilde\M}(y,q) \\
            = \frac{C_{k_{\widetilde\M},2}}{2\left(C_{k_{\widetilde\M},0}\right)^{2\alpha}} e^{(2\alpha-1)\widetilde U(x, u)}\Delta_{2(1-\alpha)}^\H \varphi(x,u) + O\left(h^{-2}(h')^{2}\right).
    \end{multline*}
\end{lemma}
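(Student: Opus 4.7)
The plan is to mirror the proof of Lemma \ref{thm:convergence integral to weighted Laplacian}, extending it to the bundle setting by combining normal-coordinate Taylor expansions on the base manifold and in the fibre. First, Lemma \ref{thm:denom expectation lifted} yields
\begin{equation*}
\E\Big[\overline{k_{\widetilde\M}}(x,u)\Big]^{\alpha}\E\Big[\overline{k_{\widetilde\M}}(y,q)\Big]^{\alpha} = (C_{k_{\widetilde\M},0})^{2\alpha}\, e^{-\alpha(\widetilde U(x,u) + \widetilde U(y,q))}\left(1 + O(h^2) + O((h')^2)\right),
\end{equation*}
so that, combined with the numerator's $e^{-\widetilde U(y,q)}$ factor, the integrand acquires the weight $(C_{k_{\widetilde\M},0})^{-2\alpha}\, e^{(2\alpha-1)\widetilde U(x,u)}\, e^{-(1-\alpha)(\widetilde U(y,q) - \widetilde U(x,u))}$ up to a relative $O(h^2) + O((h')^2)$ error.

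Second, I would pass to normal coordinates twice: on the base, $y = \exp_x(hv)$ for $v \in B_m(0;1)$; in the fibre $\widetilde\M_y$ centred at $\widetilde{\mathrm{P}}_{yx}u$, write $q = \exp^{\widetilde\M_y}_{\widetilde{\mathrm{P}}_{yx}u}(h'w)$ for $w \in B_{m'}(0;1)$. The volume element splits into horizontal and vertical Ricci expansions of the form $1 + O(h^2)$ and $1 + O((h')^2)$ respectively, and since $\widetilde U = U\circ\pi$ does not depend on the fibre, $e^{-(1-\alpha)(\widetilde U(y,q) - \widetilde U(x,u))} = 1 + (\alpha-1)d_xU(v)h + O(h^2)$.

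Third, decompose the test-function increment as $\varphi(y,q) - \varphi(x,u) = [\varphi(y,\widetilde{\mathrm{P}}_{yx}u) - \varphi(x,u)] + [\varphi(y,q) - \varphi(y,\widetilde{\mathrm{P}}_{yx}u)]$. The first bracket is a horizontal displacement along the horizontal lift of $v$ and expands as a first-order horizontal-derivative term of order $h$ plus $\tfrac{1}{2}\Hess^\H\varphi(v,v)h^2 + O(h^3)$; the second bracket is a purely vertical displacement of size $h'$ inside $\widetilde\M_y$ and expands as a first-order vertical-derivative term of order $h'$ plus $O((h')^2)$; cross terms are $O(hh')$ but odd in $v$ or $w$. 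Applying Lemma \ref{thm:1st 2nd 3rd moments k_h}, all odd moments vanish; the second $v$-moment combines with the $(\alpha-1)d_xU(v)h$ contribution of the potential to yield $\tfrac{C_{k_{\widetilde\M},2}}{2}(\Delta - 2(1-\alpha)\nabla U\cdot\nabla)\varphi(x,u) = \tfrac{C_{k_{\widetilde\M},2}}{2}\Delta^\H_{2(1-\alpha)}\varphi(x,u)$; the second $w$-moment contributes at most $O((h')^2)$ to the integral, which becomes $O(h^{-2}(h')^2)$ after the overall $h^{-2}$ prefactor and absorbs the leftover $O(h^2)$ horizontal remainder in the regime considered.

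The main obstacle is the identification of the horizontal second-derivative contribution with the intrinsic horizontal Hessian of Definition \ref{def:Rough horizontal Laplacian}, and the verification that the vertical normal coordinates inject no spurious horizontal drift. The first point relies on $\widetilde{\mathrm{P}}_{yx}$ being Levi-Civita parallel transport on $\widetilde\M$, so that the horizontal lift of $v$ agrees with the horizontal part of the exponential displacement to second order in $h$; the second point uses the totally-geodesic-fibre hypothesis, which gives $\sum_j (\widetilde\nabla_{F_j}F_j)^\H = 0$ and prevents any horizontal curvature contribution from the fibre expansion at the relevant order.
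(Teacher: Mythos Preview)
Your proposal is correct and follows essentially the same approach as the paper: expand the denominator via \cref{thm:denom expectation lifted}, split $\varphi(y,q)-\varphi(x,u)$ into a vertical piece $\varphi(y,q)-\varphi(y,\widetilde{\mathrm{P}}_{yx}u)$ and a horizontal piece $\varphi(y,\widetilde{\mathrm{P}}_{yx}u)-\varphi(x,u)$, pass to normal coordinates in the fibre and on the base respectively, and use \cref{thm:1st 2nd 3rd moments k_h} to kill the odd moments. The paper makes the identification of the horizontal second derivative with $\Hess^\H\varphi$ explicit by differentiating $\varphi\circ\widetilde\gamma$ along the horizontal lift $\widetilde\gamma$ of the base geodesic and using that $\widetilde\gamma$ is a geodesic in $\widetilde\M$; your remark about totally geodesic fibres is not actually needed at this step, since the vertical bracket only contributes at order $h'$ (killed by $M^{k_{\widetilde\M}}_{\cdot,1}=0$) and $O((h')^2)$, with no horizontal drift arising.
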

\begin{proof}
    We decompose the difference of test functions with respect to the geometric structure;
    \begin{equation*}
        \varphi(y,q) - \varphi(x,u) = \left(\varphi(y,q) - \varphi\left(y, \widetilde{\mathrm{P}}_{yx}u\right)\right) + \left(\varphi\left(y, \widetilde{\mathrm{P}}_{yx}u\right) - \varphi(x,u)\right).
    \end{equation*}
    We expand the denominator; $(\Lambda^N, (\Lambda')^{N'})$ is a ($\mathrm{PPP}$) of intensity $NN'e^{-\widetilde U}d\Vol_{\widetilde\M}$
    \begin{align*}
        \E\Big[\overline{k_{\widetilde\M}}(x,u)\Big] &\coloneqq \E\left[\frac{1}{N} \frac{1}{N'}\sum_{X^j \in \Lambda^N}\sum_{X^j_b \in (\Lambda')^{N'}} h^{-m} (h')^{-m'}k_{\widetilde\M}\left(\frac{d_{\widetilde\M}(x, X^j)}{h}, \frac{d_{\widetilde\M_{X^j}}(\widetilde{\mathrm{P}}_{X^j x}u, X^j_b)}{h'}\right)\right] \\
            &= \int_{\M}\int_{\widetilde\M_z} h^{-m} (h')^{-m'} k_{\widetilde\M} \left(\frac{d_\M(x, z)}{h}, \frac{d_{{\widetilde\M}_{z}}(\widetilde{\mathrm{P}}_{zx} u, q)}{h'}\right) \: d\Vol_{\widetilde\M_z}(q)d\mu(z). \numberthis\label{eq:expectation overline def tilde k}
    \end{align*}
    \indent\textbf{Step 1 (Expansion in the fiber)}. We expand the first term of the splitting in normal coordinates about $(y, \widetilde{\mathrm{P}}_{yx}u)$. 
    We replace the denominator by \eqref{eq:expectation overline def tilde k}, which is computed in \cref{thm:denom expectation lifted}, to obtain
    \begin{align*}
        &h^{-2}\int_{\widetilde\M} \frac{h^{-m} (h')^{-m'} k_{\widetilde\M} \left(\frac{d_\M(x, y)}{h}, \frac{d_{{\widetilde\M}_{x}}(\widetilde{\mathrm{P}}_{yx} u, q)}{h'}\right) \left(\varphi(y,q) - \varphi\left(y, \widetilde{\mathrm{P}}_{yx}u\right)\right)}{\left(\E\Big[\overline{k_{\widetilde\M}}(x,u)\Big]\right)^\alpha\left(\E\Big[\overline{k_{\widetilde\M}}(y,q)\Big]\right)^\alpha}e^{-\widetilde U(y,q)} \: d\Vol_{\widetilde\M}(y,q) \\
            &= h^{-2}\frac{e^{\alpha U(x)}}{\left(C_{k_{\widetilde\M},0}\right)^{2\alpha}} h^{-2}\int_{B_\M(0,h)}\int_{B_{T_{(y,\widetilde{\mathrm{P}}_{yx}u)}\widetilde\M_y}(0,1)} h^{-m} k_{\widetilde\M} \left(\frac{d_\M(x, y)}{h}, \abs{w}\right)\left(1 + O(h^2) + O((h')^2)\right) \\
            &\quad \times \left(\varphi\left(\exp_{(y, \widetilde{\mathrm{P}}_{yx}u)}^{\widetilde\M_y}(h'w)\right) - \varphi(y,\widetilde{\mathrm{P}}_{yx}u)\right)e^{-(1-\alpha)U(y)}\sqrt{\det \widetilde g^\V\left(\exp_{(y,\widetilde{\mathrm{P}}_{yx}u)}^{\widetilde\M_y}(h'w)\right)} \: dwd\Vol_\M(y) \numberthis\label{eq:lifted intermediate step 1st term splitting}
    \end{align*}
    Denote by $(F_j)_{1\le j\le m'}$ the orthonormal base of $T_{(y, \widetilde{\mathrm{P}}_{yx}u)}\widetilde\M_y = \V_{(y, \widetilde{\mathrm{P}}_{yx}u)}$ so that
    \begin{equation*}
        \left(\varphi\left(\exp_{(y, \widetilde{\mathrm{P}}_{yx}u)}^{\widetilde\M_y}(h'w_y)\right) - \varphi(y,\widetilde{\mathrm{P}}_{yx}u)\right) = d_{(y,\widetilde{\mathrm{P}}_{yx}u)}\varphi(w)h' + O((h')^2) = F_j\varphi w^j h' + O((h')^2).
    \end{equation*}
    and \eqref{eq:lifted intermediate step 1st term splitting} becomes
    \begin{equation*}
        h^{-2}\frac{e^{\alpha U(x)}}{\left(C_{k_{\widetilde\M},0}\right)^{2\alpha}} h^{-2}\int_{B_\M(0,h)}e^{-(1-\alpha)U(y)} h'F_j\varphi \int_{B_{m'}(0,1)} h^{-m} k_{\widetilde\M} \left(\frac{d_\M(x, y)}{h}, \abs{w}\right) w^j \:dwdy + O\left(h^{-2}(h')^2\right),
    \end{equation*}
    whose integrated term is zero by \cref{thm:1st 2nd 3rd moments k_h}. Thus
    \begin{multline}\label{eq:lifted laplacian URW alpha decomposition 1 final}
        h^{-2}\int_{\M}\int_{\widetilde\M_y} \frac{h^{-m} (h')^{-m'} k_{\widetilde\M} \left(\frac{d_\M(x, y)}{h}, \frac{d_{{\widetilde\M}_{x}}(\widetilde{\mathrm{P}}_{yx} u, q)}{h'}\right) \left(\varphi(y,q) - \varphi\left(y, \widetilde{\mathrm{P}}_{yx}u\right)\right)}{\left(\E\Big[\overline{k_{\widetilde\M}}(x,u)\Big]\right)^\alpha\left(\E\Big[\overline{k_{\widetilde\M}}(y,q)\Big]\right)^\alpha} \: d\Vol_{\widetilde\M_y}(q)d\mu(y) \\
        = O\left(h^{-2}(h')^2\right).
    \end{multline}
    \indent \textbf{Step 2 (Expansion along the base manifold)}. We expand the second term of the splitting in normal coordinates about both $x$ and $(y, \widetilde{\mathrm{P}}_{yx}u)$.
    As in Step 1, by replacing the denominator, we obtain
    \begin{align*}
        &h^{-2}\int_{\widetilde\M} \frac{h^{-m} (h')^{-m'} k_{\widetilde\M} \left(\frac{d_\M(x, y)}{h}, \frac{d_{{\widetilde\M}_{y}}(\widetilde{\mathrm{P}}_{yx} u, q)}{h'}\right) \left(\varphi\left(y, \widetilde{\mathrm{P}}_{yx}u\right) - \varphi(x,u)\right)}{\left(\E\Big[\overline{k_{\widetilde\M}}(x,u)\Big]\right)^\alpha\left(\E\Big[\overline{k_{\widetilde\M}}(y,q)\Big]\right)^\alpha}e^{-\widetilde U(y,q)} \: d\Vol_{\widetilde\M}(y,q) \\
            &=h^{-2}\frac{e^{(2\alpha-1)U(x)}}{\left(C_{k_{\widetilde\M},0}\right)^{2\alpha}} \int_{B_\M(0, h)}\int_{B_{T_{(y,\widetilde{\mathrm{P}}_{yx}u)}\widetilde\M_y}(0,1)} h^{-m}k_{\widetilde\M}\left(\frac{d_{\widetilde\M}(x,y)}{h}, \abs{w}\right) \left(\varphi\left(y, \widetilde{\mathrm{P}}_{yx}u\right) - \varphi(x,u)\right) \\
                &\quad
                \times e^{-(1-\alpha)\left(U(y) - U(x)\right)} \left(1 + O(h^2) + O((h')^2\right)\sqrt{\det \widetilde g^\V\left(\exp_{(y,\widetilde{\mathrm{P}}_{yx}u)}^{\widetilde\M_y}(h'w)\right)}\: dw d\Vol_\M(y). \numberthis\label{eq:Taylor lifted laplacian URW s decomposition 2 before coordinates}   
    \end{align*}
    Denote by $\gamma \in C^\infty(I;\M)$ the geodesic with $\gamma(0) = x$ and $\gamma'(0) = hv$ so that $\gamma(1) = y$. Its horizontal lift $\widetilde\gamma$ starting at $(x, u)$ is the unique horizontal geodesic satisfying
    \begin{equation*}
        \pi \circ \widetilde\gamma = \gamma, \quad \widetilde\gamma'(t) \in \H_{\widetilde\gamma}
    \end{equation*}
    with $\widetilde\gamma(0) = (x, u)$ and $\widetilde\gamma'(0) = h\widetilde v$ so that $\widetilde\gamma(1) = (y, \widetilde{\mathrm{P}}_{yx}u)$. Then
    \begin{equation*}
        \left(\varphi\left(y, \widetilde{\mathrm{P}}_{yx}u\right) - \varphi(x,u)\right) = \varphi \circ\widetilde\gamma(1) - \varphi \circ \widetilde\gamma(0) = \frac{d}{dt}\Big|_{t=0}\varphi \circ\widetilde\gamma(t) + \frac{1}{2}\frac{d^2}{dt^2}\Big|_{t=0}\varphi \circ\widetilde\gamma(t) + O(h^3),
    \end{equation*}
    where
    \begin{equation*}
        \frac{d}{dt}\varphi \circ\widetilde\gamma(t) = d_{\widetilde\gamma(t)}\varphi \circ \left(d\pi_{\widetilde\gamma(t)}\right)^{-1}\left(\gamma'(t)\right) = d_{\widetilde\gamma(t)}\varphi\left(\widetilde\gamma'(t)\right) = \widetilde\nabla\varphi \cdot \widetilde\gamma'(t) = \left(\widetilde\nabla\varphi\right)^\H \cdot \widetilde\gamma'(t),
    \end{equation*}
    and, since $\widetilde\gamma$ is an horizontal geodesic and the inner product is compatible with the connection $\widetilde\nabla$,
    \begin{equation*}
        \frac{d^2}{dt^2}\varphi \circ\widetilde\gamma(t) = \frac{d}{dt}\left(\left(\widetilde\nabla\varphi\right)^\H \cdot \widetilde\gamma'(t)\right) = \left(\widetilde\nabla_{\widetilde\gamma'(t)} \left(\widetilde\nabla\varphi\right)^\H\right) \cdot \widetilde\gamma'(t) + \left(\widetilde\nabla\varphi\right)^\H \cdot\widetilde\nabla_{\widetilde\gamma'(t)}\widetilde\gamma'(t) = \left(\widetilde\nabla_{\widetilde\gamma'(t)} \left(\widetilde\nabla\varphi\right)^\H\right) \cdot \widetilde\gamma'(t).
    \end{equation*}
    The expansion is thus given by
    \begin{equation}
        \left(\varphi\left(y, \widetilde{\mathrm{P}}_{yx}u\right) - \varphi(x,u)\right) = d_{(x,u)}\varphi\left(\widetilde v\right)h + \frac{1}{2}\Hess_x^\H\varphi\left(\widetilde v, \widetilde v\right)h^2 + O(h^3).
    \end{equation}
    We write $v = v^iE_i$ with orthonormal base $(E_i)_{1\le i\le m}$ of $T_x\M$ and horizontally lift it by linearity to $\widetilde v = v^i \widetilde E_i$ with the orthonormal basis $(\widetilde{E}_i)_{1\le i\le m}$ of $\H_{(x,u)}$; the above expansion then becomes
    \begin{align*}
        \left(\varphi\left(y, \widetilde{\mathrm{P}}_{yx}u\right) - \varphi(x,u)\right) &= \widetilde E_i \varphi v^i h + \frac{1}{2}\left(\left(\widetilde\nabla_{\widetilde E_i}\left(\widetilde\nabla\varphi\right)^\H\right) \cdot \widetilde E_j\right) v^iv^j h^2 + O(h^3) \\
            &= \widetilde E_i \varphi v^i h + \frac{1}{2}\left(\widetilde E_i \widetilde E_j - \left(\widetilde\nabla_{\widetilde E_i}E_j\right)^\H\right)\varphi v^iv^j h^2 + O(h^3). \numberthis\label{eq:expansion varphi decomposition 2 coordinates}
    \end{align*}
    Substituting \eqref{eq:expansion varphi decomposition 2 coordinates} into \eqref{eq:Taylor lifted laplacian URW s decomposition 2 before coordinates}, we obtain
    \begin{align*}
        &h^{-2}\frac{e^{(2\alpha-1)U(x)}}{\left(C_{k_{\widetilde\M},0}\right)^{2\alpha}} \int_{B_{m}(0,1)}\int_{B_{m'}(0,1)} k_{\widetilde\M}\left(\abs{v}, \abs{w}\right) \Bigg\{\left(\widetilde E_i \varphi v^i h + \frac{1}{2}\left(\widetilde E_i \widetilde E_j - \left(\widetilde\nabla_{\widetilde E_i}\widetilde E_j\right)^\H\right)\varphi v^iv^j h^2 + O(h^3)\right) \\
            &\quad \times \left(1 - (1 - \alpha)E_l U v^l h + O(h^2)\right) \left(1 + O(h^2) + O((h')^2\right)\left(1 + O((h')^2\right)\left(1 + O(h^2))\right)\: dw dv \\
        &=h^{-2}\frac{e^{(2\alpha-1)U(x)}}{\left(C_{k_{\widetilde\M},0}\right)^{2\alpha}} \int_{B_{m}(0, 1)}\int_{B_{m'}(0,1)} k_{\widetilde\M}\left(\abs{v}, \abs{w}\right) \\
            &\quad \times \Bigg\{\widetilde E_i \varphi v^i h + \frac{1}{2}\left(\widetilde E_i \widetilde E_j - \left(\widetilde\nabla_{\widetilde E_i}\widetilde E_j\right)^\H - 2(1-\alpha)E_iU\widetilde E_j\right)\varphi v^iv^j h^2 \Bigg\}\: dw dv + O(h). \numberthis\label{eq:Taylor lifted laplacian URW s decomposition 2 after coordinates}
    \end{align*}
    The first moment in \eqref{eq:Taylor lifted laplacian URW s decomposition 2 after coordinates} vanishes, and the second yields a weighted horizontal Laplacian by \cref{thm:1st 2nd 3rd moments k_h};
    \begin{align*}
        \frac{1}{2}\left(\widetilde E_i \widetilde E_j - \left(\widetilde\nabla_{\widetilde E_i}\widetilde E_j\right)^\H - 2(1-\alpha)E_iU\widetilde E_j\right)\varphi &\int_{B_{m}(0, 1)}\int_{B_{m'}(0,1)} v^iv^j \: dwdv \\
            &= \frac{C_{k_{\widetilde\M,2}}}{2}\left(\widetilde E_i \widetilde E_j - \left(\widetilde\nabla_{\widetilde E_i}\widetilde E_j\right)^\H - 2(1-\alpha)E_iU\widetilde E_j\right)\varphi\delta^{ij} \\
            &= \frac{C_{k_{\widetilde\M,2}}}{2}\sum_{i=1}^m \left(\widetilde E_i^2 - \left(\nabla_{\widetilde E_i}\widetilde E_i\right)^\H - 2(1-\alpha)\widetilde E_i \widetilde U \widetilde E_i\right)\varphi \\
            &\coloneqq \frac{C_{k_{\widetilde\M,2}}}{2} \Delta^\H_{2(1-\alpha)} \varphi.
    \end{align*}
    Thus,
    \begin{multline}\label{eq:lifted laplacian URW alpha decomposition 2 final}
        h^{-2}\int_{\M}\int_{\widetilde\M_y} \frac{h^{-m} (h')^{-m'} k_{\widetilde\M} \left(\frac{d_\M(x, y)}{h}, \frac{d_{{\widetilde\M}_{x}}(\widetilde{\mathrm{P}}_{yx} u, q)}{h'}\right) \left(\varphi\left(y, \widetilde{\mathrm{P}}_{yx}u\right) - \varphi(x,u)\right)}{\left(\E\Big[\overline{k_{\widetilde\M}}(x,u)\Big]\right)^\alpha\left(\E\Big[\overline{k_{\widetilde\M}}(y,q)\Big]\right)^\alpha} \: d\Vol_{\widetilde\M_y}(q)d\mu(y) \\
        = \frac{C_{k_{\widetilde\M},2}}{2\left(C_{k_{\widetilde\M},0}\right)^{2\alpha}}e^{(2\alpha-1)\widetilde U(x,u)}\Delta_{2(1-\alpha)}^\H \varphi(x,u) + O(h).
    \end{multline}
    We conclude by putting together \eqref{eq:lifted laplacian URW alpha decomposition 1 final} and \eqref{eq:lifted laplacian URW alpha decomposition 2 final}.
\end{proof}

{\bf Acknowledgment:} This publication is part of the project Interacting particle systems and Riemannian geometry (with project number OCENW.M20.251) of the research program Open Competitie ENW which is financed by the Dutch Research Council (NWO) \href{https://www.nwo.nl/en/projects/ocenwm20251}{\includegraphics[height=\fontcharht\font`\B]{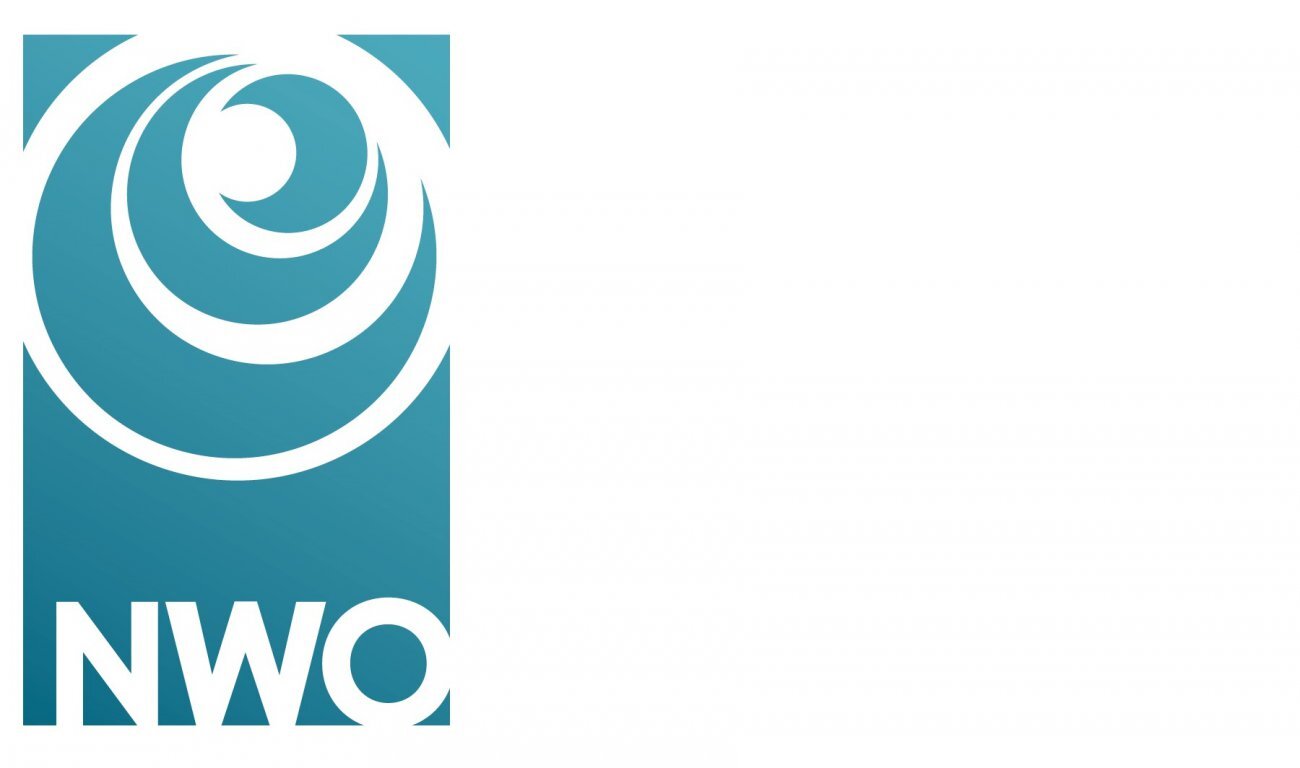}} \hspace{-10pt}.

\newpage
\printbibliography

\end{document}